%
%
%
%
\documentclass{compositio}

\usepackage{graphicx}
\usepackage[font=small,labelfont=bf]{caption}
\usepackage{epstopdf}
\usepackage{enumitem}
\usepackage{lipsum}
\usepackage{comment}
\usepackage{amsthm}
\usepackage{amsmath,amsfonts,amsthm,bm}
\usepackage{verbatim}
\usepackage{tikz}
\usepackage{tikz-cd}
\usetikzlibrary{positioning}
\usetikzlibrary{arrows}
\usetikzlibrary{decorations.markings}
\usepackage{yhmath}
\usepackage{relsize}
\usepackage{calc}
\usepackage{mathrsfs}

\newtheorem{theorem}{Theorem}[section]
\makeatletter
\newcommand{\proofcase}[2]{%
  \par
  \addvspace{\medskipamount}%
  \noindent\emph{Case #1: #2}%
  \@afterheading
}
\makeatother
\makeatletter
\newcommand{\proofstep}[2]{%
  \par
  \addvspace{\medskipamount}%
  \noindent\emph{Step #1: #2}%
  \@afterheading
}
\makeatother
\newtheorem{lemma}[theorem]{Lemma}
\newtheorem{corollary}[theorem]{Corollary}

\theoremstyle{definition}

\theoremstyle{remark}

\numberwithin{equation}{section}



\makeatletter

\makeatother

\makeatletter
\def\blfootnote{\xdef\@thefnmark{}\@footnotetext}
\makeatother

\begin{document}

\title{Proper CAT(0) Actions of Unipotent-Free Linear Groups}

\author{Sami Douba}
\thanks{The author was partially supported by the National Science Centre, Poland UMO-2018/30/M/ST1/00668.}
\email{sami.douba@mail.mcgill.ca}
\address{Department of Mathematics and Statistics, McGill University, Montreal, QC H3A 0B9}
\classification{Primary: 20F67; Secondary: 20F65}

\begin{abstract}
Let $\Gamma$ be a finitely generated group of matrices over $\mathbb{C}$. We construct an isometric action of $\Gamma$ on a complete CAT(0) space $X$ such that the restriction of this action to any subgroup of $\Gamma$ containing no nontrivial unipotent elements is well behaved. As an application, we show that if $M$ is a graph manifold that does not admit a nonpositively curved Riemannian metric, then any finite-dimensional $\mathbb{C}$-linear representation of $\pi_1(M)$ maps a nontrivial element of $\pi_1(M)$ to a unipotent matrix. In particular, the fundamental groups of such 3-manifolds do not admit any faithful finite-dimensional unitary representations.
\end{abstract}

\maketitle

\section{Introduction}

Let $F$ be a field and $n$ a positive integer. An element of $\mathrm{SL}_n(F)$ is {\it unipotent} if it has the same characteristic polynomial as the identity matrix. In \cite{button2017properties, button2019aspects}, Button demonstrated that finitely generated subgroups of $\mathrm{SL}_n(F)$ containing no infinite-order unipotent elements share some properties with groups acting properly by semisimple isometries on complete $\mathrm{CAT}(0)$ spaces. Indeed, Button showed that if $F$ has positive characteristic (in which case any unipotent element of~$\mathrm{SL}_n(F)$ has finite order), then any finitely generated subgroup of~$\mathrm{SL}_n(F)$ admits such an action \cite[Theorem~2.3]{button2019aspects}. The main theorem of this article is intended to serve as an analogue of the latter result in the characteristic-zero setting.

\begin{theorem}\label{main}
Let $\Gamma$ be a finitely generated subgroup of $\mathrm{SL}_n(\mathbb{C})$, $n> 0$. Then $\Gamma$ acts on a complete CAT(0) space $X$ such that 
\begin{enumerate}[label=(\roman*)]
\item for any subgroup $H < \Gamma$ containing no nontrivial unipotent matrices, the induced action of~$H$ on $X$ is proper;
\item if such a subgroup $H$ is free abelian of finite rank, then $H$ preserves and acts as a lattice of translations on a thick flat in $X$; in particular, any infinite-order element of such a subgroup~$H$ acts ballistically on $X$; \label{thickflat}
\item if $g \in \Gamma$ is a diagonalizable, then $g$ acts as a semisimple isometry of $X$.
\end{enumerate}
\end{theorem}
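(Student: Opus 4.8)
\emph{Setup and the shape of $X$.} The plan is first to pin $\Gamma$ down arithmetically: since $\Gamma$ is finitely generated, the entries of a finite generating set and of its inverses generate a subring $A\subset\mathbb{C}$ that is a domain finitely generated over $\mathbb{Z}$, so $\Gamma\subseteq\mathrm{SL}_n(A)$ with $K:=\operatorname{Frac}(A)$ a finitely generated field of characteristic zero. I would then build $X$ as a finite product (with the $\ell^2$ metric) $X=\prod_{v\in V}X_v$ indexed by a carefully chosen finite family $V$ of valuations of $K$ and of finite extensions: for non-archimedean $v$, $X_v$ is the Bruhat--Tits building of $\mathrm{SL}_n(K_v)$, a complete $\mathrm{CAT}(0)$ space on which \emph{every} element of $\mathrm{SL}_n(K_v)$ acts as a semisimple isometry; for the archimedean data one uses, in place of the symmetric space $\mathrm{SL}_n(\mathbb{C})/\mathrm{SU}(n)$ with its tautological action, a suitable asymptotic-cone / $\mathbb{R}$-building replacement, engineered so that $\Gamma$ acts isometrically, unipotent matrices act with bounded orbits, and archimedean size is still recorded quantitatively. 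A finite product of complete $\mathrm{CAT}(0)$ spaces is again complete $\mathrm{CAT}(0)$, and $\mathrm{SL}_n(K)\hookrightarrow\prod_{v\in V}\mathrm{SL}_n(K_v)$ furnishes the action; the family $V$ is enlarged (finitely, using that $\Gamma$ is finitely generated) until the two arithmetic facts below hold.

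\emph{The arithmetic inputs.} The key algebraic lemma I would prove is a detection statement: for $h\in\mathrm{SL}_n(A)$, the matrix $h$ has an eigenvalue that is not a root of unity if and only if $h$ fails to act elliptically at one of the places of $V$. The forward direction is the substantial one: every eigenvalue $\lambda$ of $h$, and likewise $\lambda^{-1}$, is integral over $A$, so $|\lambda|_w=1$ at every place $w$ where $A$ is integral; by Kronecker's theorem and its function-field counterpart --- equivalently, the product formula --- a non-root-of-unity $\lambda$ must therefore have $|\lambda|_w\ne 1$ at one of the remaining, finitely many, places, and those are the ones put into $V$. The second input is the structural observation that if $H\le\Gamma$ contains no nontrivial unipotent, then every infinite-order $h\in H$ has a non-root-of-unity eigenvalue: writing the Jordan decomposition $h=h_sh_u$, if $h_s$ had finite order $m$ then $h^m=h_u^m$ would be a nontrivial unipotent element of $H$ (as $\operatorname{char}K=0$ forces $h_u$ to have infinite order), contradicting $|h|=\infty$; so $h_s$, hence some eigenvalue of $h$, has infinite order.

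\emph{Deducing the conclusions.} Conclusion (iii) is then immediate: a diagonalizable $g$ is a semisimple matrix, so it acts as a semisimple isometry of each Bruhat--Tits factor and of the archimedean factor, and a product of semisimple isometries is semisimple. For (i): given a unipotent-free $H$, the two inputs show that each infinite-order $h\in H$ displaces the basepoint by an amount reflecting a positive translation length, i.e.\ acts ballistically; to upgrade ``every element is ballistic'' to ``$H$ acts properly'' I would use that displacement at the places of $V$ controls the elementary divisors and singular values of group elements, together with finite generation of $H$, so that any sequence in $H$ eventually leaving every finite set has basepoint-displacement tending to $\infty$. For (ii): a unipotent-free $H\cong\mathbb{Z}^k$ is abelian with only non-root-of-unity eigenvalues away from the identity, and I would show that a commuting family of semisimple isometries of Euclidean ($\mathbb{R}$-)buildings whose translation behaviour at the places of $V$ is suitably independent stabilizes, and acts as a cocompact lattice of translations on, a flat of dimension $k$ --- ``thick'' in that it spans an apartment / lies in the thick part at each relevant factor --- whence ballisticity of infinite-order elements follows as a special case.

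\emph{Main obstacle.} The hard part is precisely the construction of the archimedean factor and the properness half of (i). In positive characteristic there are no archimedean places and $\mathrm{SL}_n$ of a finitely generated ring is automatically discrete in a finite product of the relevant local groups, so one gets a genuine proper action by semisimple isometries for free (this is Button's theorem); in characteristic zero neither feature is available --- $\mathrm{SL}_n(A)$ need not be discrete in any finite product of real and $p$-adic groups, and a matrix with nontrivial unipotent part acts parabolically on $\mathrm{SL}_n(\mathbb{C})/\mathrm{SU}(n)$, which is incompatible with (ii) and (iii). The archimedean factor must therefore be designed to do three things at once: carry an isometric $\Gamma$-action, record archimedean size finely enough that \emph{all} unipotent-free subgroups act properly rather than merely ballistically, and collapse unipotent directions so that diagonalizable and unipotent-free-abelian subgroups act semisimply. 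Producing such a factor --- presumably by an asymptotic-cone or ultralimit construction adapted to $\Gamma$, or a comparably bespoke substitute --- and checking that it glues with the Bruhat--Tits factors into a single complete $\mathrm{CAT}(0)$ space, is where the real work lies.
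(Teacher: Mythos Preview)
Your proposal contains a genuine misconception that derails the argument. You identify the ``main obstacle'' as the archimedean factor, and conclude that the symmetric space $\mathrm{SL}_n(\mathbb{C})/\mathrm{SU}(n)$ must be replaced by some asymptotic-cone or $\mathbb{R}$-building gadget because ``a matrix with nontrivial unipotent part acts parabolically \ldots, which is incompatible with (ii) and (iii).'' But there is no incompatibility. For (iii), only \emph{diagonalizable} $g$ are claimed to act semisimply, and these have trivial unipotent part; one only needs to check that each field embedding $\sigma_j$ sends diagonalizable matrices to diagonalizable matrices, which is elementary. For (ii), reread the definition of a \emph{thick} flat: it is a convex subspace splitting as $Z\times\mathbb{R}^r$ on which $H$ acts diagonally, by a lattice of translations on $\mathbb{R}^r$ and merely \emph{neutrally} on $Z$. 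Parabolic behaviour is not forbidden; it is absorbed into the $Z$ factor. So the paper simply takes the archimedean part to be a product of honest symmetric spaces, one copy of $\mathrm{SL}_n(\mathbb{C})/\mathrm{SU}(n)$ for each of the $d$ embeddings $\sigma_1,\ldots,\sigma_d$ of the number field $F$ (the algebraic closure of $\mathbb{Q}$ inside $E=\operatorname{Frac}(A)$) into $\mathbb{C}$, extended arbitrarily to $E$.

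With that in hand, the properness argument for (i) is not the Kronecker-type detection you sketch but rather a discreteness-of-integers argument. For a vertex $v$ of the building product $X_2$, elements of the stabilizer $H_v$ are conjugate into $\mathrm{SL}_n(\mathcal{O}_i)$ at each valuation, so the coefficients of their characteristic polynomials lie in $A\cap\bigcap_i\mathcal{O}_i\subset\mathcal{O}$, the ring of integers of $F$. The diagonal map $\mathcal{O}^n\to\mathbb{C}^{nd}$ via the $\sigma_j$ has discrete image, so the characteristic-polynomial map $P$ sends $\sigma(H_v)$ to a discrete set; if $\sigma(h_k)\to 1$ then eventually $P(\sigma(h_k))=P(1)$, forcing $h_k$ unipotent and hence trivial. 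This is where the several archimedean embeddings are essential and where your single-place picture would fail. For (ii), the paper simultaneously triangularizes the commuting $\sigma_j(h_k)$, passes to their diagonal parts $\Delta_j(h_k)$ (same characteristic polynomials, now semisimple), applies (i) and the classical flat torus theorem to get a genuine $r$-flat for $\Delta'(H)$, then uses that $\sigma_j(h_k)$ and $\Delta_j(h_k)$ share the same canonical attracting fixed point on $\partial X$ to feed into a ``thick flat torus theorem'' based on Duchesne's $\pi$-visibility result. None of these steps appear in your outline, and the asymptotic-cone construction you propose instead is both unnecessary and, as you yourself note, not actually carried out.
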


See Section \ref{prelim} for the relevant definitions. The space $X$ is a finite product of symmetric spaces of non-compact type and (possibly locally infinite) Euclidean buildings. Since an element of $\mathrm{SL}_n(\mathbb{C})$ that is both diagonalizable and unipotent must be trivial, the following corollary is immediate.

\begin{corollary}\label{maincor}
Any finitely generated subgroup of $\mathrm{SL}_n(\mathbb{C})$ consisting entirely of diagonalizable matrices acts properly by semisimple isometries on a complete $\mathrm{CAT}(0)$ space.
\end{corollary}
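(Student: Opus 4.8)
The plan is to apply Theorem~\ref{main} directly with $H = \Gamma$. The only thing to verify is that $\Gamma$ itself satisfies the hypothesis appearing in parts (i)--(iii), namely that it contains no nontrivial unipotent matrices. This is immediate from the standing assumption: a matrix that is simultaneously diagonalizable and unipotent is diagonalizable with all eigenvalues equal to $1$, hence equals the identity. Thus $\Gamma$ is a subgroup of $\Gamma$ containing no nontrivial unipotent elements, and Theorem~\ref{main} applies to it.

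Concretely, let $X$ be the complete CAT(0) space furnished by Theorem~\ref{main}. Applying part (i) with $H = \Gamma$ shows that the action of $\Gamma$ on $X$ is proper. Applying part (iii), each element $g \in \Gamma$---which is diagonalizable by hypothesis---acts on $X$ as a semisimple isometry. Combining these two facts, $\Gamma$ acts properly by semisimple isometries on the complete CAT(0) space $X$, which is precisely the assertion of the corollary.

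There is essentially no obstacle here; the corollary is a formal consequence of the main theorem. The two points worth stating carefully are, first, that ``acting properly by semisimple isometries'' unwinds by definition to the conjunction of properness of the action with each individual group element acting as a semisimple isometry, so that parts (i) and (iii) together suffice; and second, that the hypothesis in force is that \emph{every} element of $\Gamma$ is diagonalizable, not merely that $\Gamma$ is generated by diagonalizable matrices (a product of diagonalizable matrices need not be diagonalizable), so that part (iii) genuinely applies to all of $\Gamma$.
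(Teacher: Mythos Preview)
Your proof is correct and follows exactly the same approach as the paper: the paper states that the corollary is immediate from Theorem~\ref{main} since an element of $\mathrm{SL}_n(\mathbb{C})$ that is both diagonalizable and unipotent must be trivial, and you have simply unpacked this observation carefully.
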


Precompact subgroups of $\mathrm{SL}_n(\mathbb{C})$ are conjugate into $\mathrm{SU}(n)$ and thus consist entirely of diagonalizable matrices. Furthermore, by the Peter–Weyl theorem, any compact Lie group can be realized as a compact subgroup of $\mathrm{SL}_n(\mathbb{C})$ for some $n$ \cite[Theorem~III.4.1]{brocker1985representations}. Thus, by Corollary \ref{maincor}, any finitely generated subgroup of a compact Lie group admits a proper action by semisimple isometries on a complete $\mathrm{CAT}(0)$ space. 

For us, a {\it graph manifold} is a connected closed orientable irreducible non-Seifert 3-manifold all of whose JSJ blocks are Seifert. Property \ref{thickflat} of the action described in Theorem \ref{main} allows us to conclude the following fact about representations of fundamental groups of graph manifolds.

\begin{theorem}\label{maingraph}
Let $M$ be a graph manifold and let $\rho: \pi_1(M) \rightarrow \mathrm{SL}_n(\mathbb{C})$ be any representation. If $M$ does not admit a nonpositively curved Riemannian metric, then there is a JSJ torus $S$ of~$M$ and a nontrivial element $h \in \pi_1(S) < \pi_1(M)$ such that $\rho(h)$ is unipotent. 
\end{theorem}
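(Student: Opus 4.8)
The plan is to feed the representation $\rho$ into Theorem \ref{main} and exploit the rigidity of the resulting $\mathrm{CAT}(0)$ action against the known structure of graph manifold groups. Let $\Gamma = \rho(\pi_1(M))$, a finitely generated subgroup of $\mathrm{SL}_n(\mathbb{C})$, and let $X$ be the complete $\mathrm{CAT}(0)$ space on which $\Gamma$ acts as in Theorem \ref{main}. Arguing by contradiction, suppose that for \emph{every} JSJ torus $S$ of $M$, the subgroup $\rho(\pi_1(S))$ contains no nontrivial unipotent matrix. I will first observe that this hypothesis propagates: if $B$ is any JSJ block of $M$, then $\pi_1(B)$ is generated by (conjugates of) its peripheral $\mathbb{Z}^2$ subgroups together with a fiber element, and a Seifert-fibered block contributes no extra unipotents provided the boundary tori contribute none — more carefully, one shows the \emph{vertical} $\mathbb{Z}$ (the fiber) in each block already lies in a peripheral torus subgroup, so the no-unipotent condition on all JSJ tori forces $\rho(\pi_1(M))$ itself, or at least each $\rho(\pi_1(B))$ and $\rho(\pi_1(S))$, to be unipotent-free; the cleanest route is to apply Theorem \ref{main}(i)–(ii) to the subgroup $H = \rho(\pi_1(S))$ for each JSJ torus $S$.

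The heart of the argument is then the following dichotomy, the point at which Property \ref{thickflat} is used. Each JSJ torus subgroup $\pi_1(S) \cong \mathbb{Z}^2$ maps under $\rho$ to a free abelian group $H_S$; if $H_S$ has a nontrivial unipotent we are done, so assume not. Then $H_S$ is a finitely generated unipotent-free abelian subgroup, hence (being abelian) its torsion-free part is free abelian of finite rank, and by Theorem \ref{main}\ref{thickflat} the quotient $\overline{H_S}$ of $H_S$ by its finite torsion subgroup preserves and acts as a lattice of translations on a thick flat $F_S \subseteq X$; in particular every infinite-order element of $\pi_1(S)$ acts ballistically (with positive translation length) on $X$. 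Similarly, a regular fiber $c$ of any Seifert block $B$ is central in $\pi_1(B)$ and is homotopic into the boundary tori of $B$, so $\rho(c)$ acts ballistically as well. Thus under our contradiction hypothesis, \emph{every} peripheral $\mathbb{Z}^2$ and every fiber element acts ballistically on $X$ — no nontrivial peripheral element has bounded orbits.

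Now I invoke the structure theory of graph manifolds that do not admit a nonpositively curved metric. By the work of Leeb and of Buyalo–Svetlov (and as made precise in the obstruction of Kapovich–Leeb), a graph manifold $M$ admits a nonpositively curved Riemannian metric if and only if a certain system of linear inequalities on the "charge" or "slope" data along the JSJ tori is solvable, equivalently if and only if the group $\pi_1(M)$ admits a nice action on a $\mathrm{CAT}(0)$ space in which the peripheral tori are isometrically embedded flats with the fibers acting as translations. The contrapositive is exactly what one extracts here: if $M$ is \emph{not} nonpositively curved, then in any isometric action of $\pi_1(M)$ on a complete $\mathrm{CAT}(0)$ space, the incompatibility of the fiber directions across some JSJ torus forces one of the peripheral elements — concretely, a fiber of one adjacent block viewed in the torus $S$ — to fail to be ballistic (it must act parabolically or with an orbit that does not track a geodesic, because the two flats coming from the two sides of $S$ would otherwise span a compatible flat realizing the gluing, contradicting the charge obstruction). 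This contradicts the conclusion of the previous paragraph. Hence our hypothesis was false: some JSJ torus $S$ has $\rho(\pi_1(S))$ containing a nontrivial unipotent, and since $\rho(\pi_1(S))$ is abelian and a nontrivial unipotent generates an infinite cyclic unipotent subgroup, we may take $h \in \pi_1(S)$ with $\rho(h)$ unipotent and nontrivial.

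The main obstacle is the last paragraph: turning "$M$ is not nonpositively curved" into a concrete statement that some peripheral element cannot act ballistically on \emph{our particular} $X$. This requires either (a) citing the precise charge/slope characterization of nonpositively curved graph manifolds and showing that ballisticity of all peripheral elements would let one build the required flats and hence solve the charge inequalities, or (b) a more hands-on argument analyzing how the thick flats $F_S$ from Theorem \ref{main}\ref{thickflat} must fit together along the amalgam structure of $\pi_1(M)$, using that along a JSJ torus the two fiber classes are linearly independent in $\pi_1(S) \cong \mathbb{Z}^2$. I expect route (a), combined with the flat-preservation in Property \ref{thickflat}, to be the intended and cleanest path; the abelian and centrality bookkeeping in the first two paragraphs is routine by comparison.
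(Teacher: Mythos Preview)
Your overall architecture matches the paper's: assume no JSJ torus produces a nontrivial unipotent, apply Theorem~\ref{main}\ref{thickflat} to each $\pi_1(S)$, and conclude $M$ is NPC, a contradiction. But several pieces of your write-up are either superfluous or misdirected, and the step you flag as ``the main obstacle'' is indeed the entire content of the argument --- and your sketch of how to fill it does not land.

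First, two simplifications. The identity matrix is unipotent, so the hypothesis ``$\rho(h)$ is not unipotent for any nontrivial $h\in\pi_1(S)$'' already forces $\rho$ to be faithful on $\pi_1(S)\cong\mathbb{Z}^2$. Hence $H_S\cong\mathbb{Z}^2$ is torsion-free, and there is no torsion quotient to take. Second, the ``propagation'' paragraph is unnecessary: Theorem~\ref{main}\ref{thickflat} is applied only to the subgroups $\pi_1(S)$, not to all of $\pi_1(B)$ or $\pi_1(M)$; you never need $\rho(\pi_1(B))$ to be unipotent-free.

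The real issue is your third paragraph. You phrase the needed implication as: \emph{if $M$ is not NPC, then in any $\mathrm{CAT}(0)$ action of $\pi_1(M)$ some peripheral element fails to be ballistic}. No such theorem is available off the shelf, and invoking the Buyalo--Svetlov or Kapovich--Leeb charge obstruction does not give it to you directly: those criteria concern the existence of certain metric data on $M$, not the behaviour of an arbitrary $\mathrm{CAT}(0)$ action. What the paper actually proves (Lemma~\ref{NPC}) is the forward direction you need, \emph{constructively}: given that every $\pi_1(S)$ acts as a lattice of translations on a thick flat in $X$, one builds an NPC metric on $M$ by hand. For each Seifert block $B$, the fibre class $f$ is ballistic (it lies in a boundary torus), so Theorem~\ref{duchesne} splits off an $\mathbb{R}$-factor on which $\pi_1(B)$ acts by translations; this yields a homomorphism $\pi_1(B)\to\mathbb{R}$ and hence a product metric on $\tilde{O}\times\mathbb{R}$ descending to $B$, with each boundary component of the base orbifold $O$ assigned the $Z$-translation length of the corresponding peripheral element. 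The metrics glue across a JSJ torus $S$ because the flat structure that $\pi_1(S)$ sees is intrinsic: the translation lengths and angles between basis elements are determined by $|\cdot|_X$ and the Tits angles between canonical attracting fixed points (this is Lemma~\ref{conjugate}), so the two sides of $S$ induce isometric flat tori. That is the ``hands-on'' route (b) you allude to, and it is the argument you should supply; route (a) via charge inequalities would ultimately require extracting the same metric data from the thick flats.
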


A manifold is said to be {\it nonpositively curved (NPC)} if it admits a nonpositively curved Riemannian metric. By work of Agol \cite{agol2013virtual}, Przytycki–Wise \cite{przytycki2018mixed}, and Liu \cite{liu2013virtual}, the fundamental group of any closed NPC 3-manifold virtually embeds in a finitely generated right-angled Artin group (RAAG). Moreover, Agol \cite{315430} showed that any finitely generated RAAG embeds in a compact Lie group. On the other hand, if $M$ is a closed aspherical non-NPC 3-manifold, then either $M$ is Seifert, in which case there is a nontrivial element of $\pi_1(M)$ that gets mapped to a virtually unipotent matrix under any faithful finite-dimensional linear representation of $\pi_1(M)$ (see, for example, the discussion in the introduction of \cite{douba2021virtually}), or the orientation cover of $M$ is a non-NPC graph manifold. Thus, we obtain from Theorem \ref{maingraph} the following corollary.  

\begin{corollary}
A closed aspherical 3-manifold $M$ is nonpositively curved if and only if there is a faithful finite-dimensional $\mathbb{C}$-linear representation of $\pi_1(M)$ whose image contains no nontrivial unipotent matrices. 
\end{corollary}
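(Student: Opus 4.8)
The plan is to prove the two implications separately, drawing the ``only if'' direction from Theorem~\ref{maingraph} and the ``if'' direction from the cubulation results recalled above.

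For the ``if'' direction, suppose $M$ is a closed aspherical NPC $3$-manifold. By the work of Agol, Przytycki--Wise, and Liu \cite{agol2013virtual, przytycki2018mixed, liu2013virtual}, $\pi_1(M)$ has a finite-index subgroup $H$ embedding in a finitely generated RAAG, which embeds in a compact Lie group by \cite{315430}; composing with a Peter--Weyl embedding of the latter into some $\mathrm{U}(m)$ gives a faithful unitary representation $\sigma \colon H \hookrightarrow \mathrm{U}(m)$. I would then set $\rho := \operatorname{Ind}_H^{\pi_1(M)}\sigma$. This is finite-dimensional; it is unitary, since induction from a finite-index subgroup carries unitary representations to unitary representations; and it is faithful, since the kernel of an induced representation is the intersection of the $\pi_1(M)$-conjugates of $\ker\sigma = 1$. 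As a unitary matrix all of whose eigenvalues equal $1$ is the identity, the image of $\rho$ contains no nontrivial unipotent matrix, which is what is wanted.

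For the ``only if'' direction I would argue contrapositively: assuming the closed aspherical $3$-manifold $M$ is not NPC, I must show that every faithful finite-dimensional $\mathbb{C}$-linear representation $\rho$ of $\pi_1(M)$ has a nontrivial unipotent in its image. Note first that $\pi_1(M)$ is torsion-free, being the fundamental group of a finite aspherical complex. By the dichotomy recalled in the introduction, either $M$ is Seifert or the orientation cover $\widehat M$ of $M$ is a non-NPC graph manifold. If $M$ is Seifert, the discussion cited from \cite{douba2021virtually} gives a nontrivial $g \in \pi_1(M)$ with $\rho(g)$ virtually unipotent, say with $\rho(g)^k$ unipotent; since $g$ has infinite order and $\rho$ is faithful, $\rho(g)^k \ne 1$, so $\rho(g)^k$ is a nontrivial unipotent in the image. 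Otherwise, I would restrict $\rho$ to the finite-index subgroup $\pi_1(\widehat M)$, on which it remains faithful, and compose with $\mathrm{GL}_N(\mathbb{C}) \hookrightarrow \mathrm{SL}_{N+1}(\mathbb{C})$, $A \mapsto A \oplus (\det A)^{-1}$; Theorem~\ref{maingraph} then yields a nontrivial $h \in \pi_1(\widehat M) < \pi_1(M)$ whose image under this representation is unipotent, so all eigenvalues of $\rho(h)$ equal $1$ and $\rho(h)$ is a nontrivial (as $h \ne 1$ and $\rho$ is faithful) unipotent in the image of $\rho$.

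The substantive input here is Theorem~\ref{maingraph}; what remains is largely bookkeeping. The one point requiring a small independent argument is the promotion in the ``if'' direction of the faithful unitary representation of the finite-index subgroup $H$ to one of $\pi_1(M)$ itself, for which I rely on the standard facts that induction from a finite-index subgroup preserves finite-dimensionality and unitarity and that the kernel of the induced representation is the intersection of the conjugates of $\ker\sigma$. The other steps --- torsion-freeness of $\pi_1(M)$, the passage from $\mathrm{GL}_N$ to $\mathrm{SL}_{N+1}$, restriction to the orientation cover, and the appeal to the geometrization-based dichotomy --- are routine.
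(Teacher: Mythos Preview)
Your argument is correct and matches the paper's approach: the paper does not give a separate proof of the corollary but derives it from the paragraph preceding it, invoking exactly the ingredients you use (virtual embedding into a RAAG and thence into a compact Lie group for the forward direction; the Seifert/graph-manifold dichotomy together with Theorem~\ref{maingraph} for the reverse). Your proposal even supplies details the paper leaves implicit, namely the use of induction from the finite-index subgroup to promote the virtual unitary representation to one of $\pi_1(M)$ itself, and the passage $\mathrm{GL}_N \hookrightarrow \mathrm{SL}_{N+1}$ needed to feed an arbitrary linear representation into Theorem~\ref{maingraph}. One cosmetic point: you have the labels ``if'' and ``only if'' swapped --- in ``$M$ is NPC iff there exists such a $\rho$'', the implication ``$M$ NPC $\Rightarrow$ $\rho$ exists'' is the \emph{only if} direction, and the other is the \emph{if} direction.
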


We remark that a result similar to Theorem \ref{main} was announced in \cite[Theorem~1.4]{matsnev2007baum}. However, the proof of \cite[Theorem~4.8]{matsnev2007baum}, on which that result rests, contains an error; a $\mathrm{CAT}(0)$ action of a finitely generated linear group $G$ with proper restrictions to certain subgroups of $G$ is desired, but what is provided is a proper $\mathrm{CAT}(0)$ action for each such subgroup of~$G$.

\subsection*{Organization}
In Section \ref{prelim}, we define the relevant objects, discuss briefly some properties of ballistic isometries of complete $\mathrm{CAT}(0)$ spaces, and introduce the central notion of a ``thick flat" in such a space. In Section \ref{lemmas}, we prove several lemmas used in the proofs of Theorems \ref{main} and \ref{maingraph}. The latter proofs are contained in Section \ref{proofs}. 

\subsection*{Acknowledgements}
I am deeply grateful to Piotr Przytycki for his encouragement, patience, and guidance. I also thank Bruno Duchesne, Zachary Munro, and Abdul Zalloum for helpful discussions.

\section{Preliminaries}\label{prelim}
\subsection{Complete $\mathrm{CAT}(0)$ spaces}
Let $X$ be a complete $\mathrm{CAT}(0)$ space and $\partial X$ its visual boundary. We will make references to the cone topology on $\overline{X} := X \cup \partial X$, described in \cite{bridson1999metric}. Under this topology, a sequence of points~$x_n \in X$ converges to $\xi \in \partial X$ if and only if for some (hence any) point $x_0 \in X$, the geodesics joining $x_0$ to $x_n$ converge uniformly on compact intervals to the unique geodesic ray emanating from $x_0$ in the class of $\xi$. In addition, we will use the angular metric $\angle$ on $\partial X$, also described in \cite{bridson1999metric}. Note that the topology on $\partial X$ induced by the angular metric is in general finer than the cone topology on $\partial X$. 

An $r$-dimensional {\it flat} in $X$ is an isometrically embedded copy of $\mathbb{R}^r$ in $X$. We say $X$ is {\it $\pi$-visible} if for any $\xi, \eta \in \partial X$ satisfying $\angle (\xi, \eta) = \pi$, there is a geodesic line in $X$ whose endpoints on $\partial X$ are $\xi$ and $\eta$. Since Euclidean spaces are  $\pi$-visible, a complete $\mathrm{CAT}(0)$ space $X$ with the property that any two points on $\partial X$ lie on the boundary of a common flat in $X$ is also $\pi$-visible. Note that if $X$ is a Euclidean building, a symmetric space of non-compact type, or a product of such spaces, then $X$ possesses the latter property by the building structure on $\partial X$, so that $X$ is $\pi$-visible. For more information on symmetric spaces, we refer the reader to the monograph \cite{eberlein1996geometry}. 

\subsection{Isometries of complete $\mathrm{CAT}(0)$ spaces}
Let $(X,d_X)$ be a complete $\mathrm{CAT}(0)$ space and let $g \in \mathrm{Isom}(X)$. The {\it translation length} of $g$ is the quantity $|g|_X := \inf_{x \in X}d_X(x,gx)$. The isometry $g$ is {\it semisimple} if $|g|_X = d_X(x_0, gx_0)$ for some~$x_0 \in X$. We say $g$ is {\it ballistic} (resp., {\it neutral}) if $|g|_X > 0$ (resp., if $|g|_X=0$), and {\it hyperbolic} if $g$ is both ballistic and semisimple. A subgroup $H < \mathrm{Isom}(X)$ acts {\it neutrally} on $X$ if each $h \in H$ is neutral. 

If $g \in \mathrm{Isom}(X)$ is ballistic, then there is a point $\omega_g \in \partial X$ such that for any $x \in X$, we have $g^nx \rightarrow \omega_g$ as $n \rightarrow \infty$ with respect to the cone topology on $\overline{X}$ \cite{caprace2009isometry}; we call $\omega_g$ the {\it canonical attracting fixed point} of $g$. We use repeatedly the following fact, due to Duchesne \cite[Prop.~6.2]{duchesne2015superrigidity}. For an arbitrary group $G$ and $g_1, \ldots, g_m \in G$, we denote by $\mathcal{Z}_G(g_1, \ldots, g_m)$ the centralizer of $g_1, \ldots, g_m$ in $G$.

\begin{theorem}\label{duchesne}
Let $X$ be a complete $\pi$-visible $\mathrm{CAT}(0)$ space and suppose $g \in \mathrm{Isom}(X)$ is ballistic. Then there is a closed convex subspace $Y \subset X$ and a metric decomposition $Y = Z \times \mathbb{R}$ such that
\begin{itemize}
\item $\mathcal{Z}_{\mathrm{Isom}(X)}(g)$ preserves $Y$ and acts diagonally with respect to the decomposition $Y = Z \times \mathbb{R}$, acting by translations on the second factor;
\item the isometry $g$ acts neutrally on the factor $Z$. 
\end{itemize}
\end{theorem}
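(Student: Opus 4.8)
The plan is to realise $Y$ as the union of all geodesic lines joining the two canonical fixed points of $g$ at infinity, and then to extract the listed properties from the structure theory of parallel sets in $\mathrm{CAT}(0)$ spaces. Since $g$ is ballistic it has a canonical attracting fixed point $\omega_g\in\partial X$, and applying the same to $g^{-1}$ yields $\omega_{g^{-1}}\in\partial X$. The first, and I expect hardest, step is to show that $\omega_g$ and $\omega_{g^{-1}}$ are \emph{opposite}, i.e.\ $\angle(\omega_g,\omega_{g^{-1}})=\pi$. Writing $\lambda:=|g|_X>0$ (which agrees with the drift $\lim_n d_X(x,g^nx)/n$), one checks that the Busemann function $b_{\omega_g}$ satisfies $b_{\omega_g}\circ g=b_{\omega_g}-\lambda$, and symmetrically $b_{\omega_{g^{-1}}}\circ g^{-1}=b_{\omega_{g^{-1}}}-\lambda$, so that $\psi:=b_{\omega_g}+b_{\omega_{g^{-1}}}$ is a $g$-invariant convex function on $X$; together with the comparison-triangle estimate coming from $d_X(g^no,g^{-n}o)=d_X(o,g^{2n}o)\sim 2\lambda n$, this forces $\psi$ to be bounded below, which is equivalent to $\angle(\omega_g,\omega_{g^{-1}})=\pi$. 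It is precisely here that ballisticity is used (the argument degenerates when $|g|_X=0$), and the cleanest route is probably to quote the relevant facts about ballistic isometries and their canonical fixed points from the literature on isometry groups of $\mathrm{CAT}(0)$ spaces rather than to reprove them.

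Granting this, $\pi$-visibility supplies a geodesic line $\ell\subset X$ with endpoints $\omega_{g^{-1}}$ and $\omega_g$. Any two geodesic lines of $X$ sharing the same pair of endpoints at infinity bound a flat strip — after synchronising parametrisations the distance between them is a convex function on $\mathbb{R}$ that is bounded at both ends, hence constant, so the Flat Strip Theorem applies — whence the parallel set $Y:=P(\ell)$ is exactly the union of all geodesic lines of $X$ with endpoints $\{\omega_g,\omega_{g^{-1}}\}$; in particular $Y$ depends only on this unordered pair. By the structure theory of parallel sets, $Y$ is a closed convex subspace of $X$ admitting a metric decomposition $Y=Z\times\mathbb{R}$ with the $\mathbb{R}$-factor directed along $\ell$, so that $\omega_g$ and $\omega_{g^{-1}}$ are the two endpoints of the $\mathbb{R}$-factor; say $\omega_g$ is the $+\infty$ end.

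Next, if $c\in\mathcal{Z}_{\mathrm{Isom}(X)}(g)$, then $cgc^{-1}=g$, hence $c\,\omega_g=\omega_{cgc^{-1}}=\omega_g$ and likewise $c\,\omega_{g^{-1}}=\omega_{g^{-1}}$; thus $c$ sends geodesic lines with endpoints $\{\omega_g,\omega_{g^{-1}}\}$ to such lines and therefore preserves $Y$ (this applies to $g$ itself as well). An isometry of $Y=Z\times\mathbb{R}$ that fixes both ends of the $\mathbb{R}$-factor necessarily respects the product decomposition: it permutes the geodesic lines asymptotic to $\omega_g$, which are exactly the fibres $\{z\}\times\mathbb{R}$, and hence induces a bijection $\phi$ of $Z$; since it fixes, rather than interchanges, the two ends, it is orientation-preserving on each such line; and the Pythagorean form of the product metric then forces the translation amount along $\mathbb{R}$ to be the same on every fibre and $\phi$ to be an isometry of $Z$. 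Applying this to each $c\in\mathcal{Z}_{\mathrm{Isom}(X)}(g)$ gives the asserted diagonal action by translations on the $\mathbb{R}$-factor.

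It remains to see that $g$ acts neutrally on $Z$. Write $g|_Y=(\phi_g,\, t\mapsto t+s)$; restricting $b_{\omega_g}\circ g=b_{\omega_g}-\lambda$ to $Y$, on which $b_{\omega_g}$ equals $-t$ up to an additive constant, gives $s=\lambda>0$. For $y=(z,t)\in Y$ we then have $g^ny=(\phi_g^nz,\, t+n\lambda)$, and by the defining property of $\omega_g$ this sequence converges in the cone topology to the $+\infty$ end of the $\mathbb{R}$-factor; but in a product $Z\times\mathbb{R}$ a sequence whose $\mathbb{R}$-coordinate tends to $+\infty$ converges to that endpoint only if the ratio of its $Z$-displacement to its $\mathbb{R}$-coordinate tends to $0$, and since $d_Z(\phi_g^nz,z)/n\to|\phi_g|_Z$ while $(t+n\lambda)/n\to\lambda$, this forces $|\phi_g|_Z=0$. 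Hence $g$ is neutral on $Z$, completing the sketch. As noted, the substantive obstacle is the opposedness of the two canonical endpoints; once that is in hand, the remaining steps are essentially formal consequences of the parallel-set decomposition.
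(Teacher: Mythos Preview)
The paper does not prove this statement; it is quoted as Proposition~6.2 of Duchesne's paper and used as a black box throughout. There is therefore no in-paper argument to compare your sketch against.

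For what it is worth, your outline is the standard one and is essentially Duchesne's: establish $\angle(\omega_g,\omega_{g^{-1}})=\pi$, invoke $\pi$-visibility to obtain a geodesic line $\ell$, take $Y=P(\ell)$, and read off the product splitting $Y\cong Z\times\mathbb{R}$ of the parallel set (Bridson--Haefliger~II.2.14) together with the diagonal action of the centralizer. Your arguments that $\mathcal{Z}_{\mathrm{Isom}(X)}(g)$ fixes $\{\omega_g,\omega_{g^{-1}}\}$ and hence preserves $Y$, that an isometry of $Z\times\mathbb{R}$ fixing both ends of the $\mathbb{R}$-factor is diagonal with a translation on $\mathbb{R}$, and that $|\phi_g|_Z=0$ via the cone-convergence of $g^ny$ to the $+\infty$ end, are all correct. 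The one place the sketch is genuinely incomplete is, as you yourself flag, the opposedness of $\omega_g$ and $\omega_{g^{-1}}$: the $g$-invariance of $\psi=b_{\omega_g}+b_{\omega_{g^{-1}}}$ does not by itself produce a lower bound, and the passage from the estimate $d_X(o,g^{2n}o)\sim 2\lambda n$ to boundedness below of $\psi$ is not spelled out. Your suggestion to cite the literature here (Caprace--Monod or Duchesne for the translation identity $b_{\omega_g}\circ g=b_{\omega_g}-|g|_X$ and the resulting angle computation) is the right call.
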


In accordance with \cite{bridson1999metric}, we define an isometric action of a group $H$ on a metric space $X$ to be {\it proper} if for any point $x \in X$, there is a neighborhood $U \subset X$ of $x$ such that $\{h \in H \> : \> U \cap h U \neq \infty\}$ is finite. In this case, the set $\{h \in H \> : \> K \cap h K \neq \infty\}$ is finite for any compact subset $K \subset X$ (see, for example, \cite[Remark~I.8.3(1)]{bridson1999metric}). Note, however, that if the metric space $X$ is not proper, then $X$ may contain balls $B$ such that $\{h \in H \> : \> B \cap h B \neq \infty\}$ is infinite; that is, the notion of properness for isometric actions used here is strictly weaker than {\it metric properness}. 

We will make use of the following well-known theorem \cite[Theorem~II.7.1]{bridson1999metric}. 

\begin{theorem}\label{classicalflattorus}
Let $H$ be a free abelian group of rank $r$ acting properly by semisimple isometries on a complete $\mathrm{CAT}(0)$ space $X$. Then $H$ preserves and acts as a lattice of translations on an $r$-dimensional flat in $X$. 
\end{theorem}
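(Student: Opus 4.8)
The strategy is to realize the desired flat inside a common minimal set for $H$, using that $H$ acts on that set by isometries of constant displacement. For $g \in \mathrm{Isom}(X)$ write $\mathrm{Min}(g) := \{x \in X : d_X(x,gx) = |g|_X\}$; this set is non-empty exactly when $g$ is semisimple, and it is then closed and convex. Fix a basis $t_1,\dots,t_r$ of $H$. First, every nontrivial $h \in H$ is hyperbolic: $h$ is semisimple, and if $|h|_X=0$ then $h$ fixes a point $x_0 \in X$, so the infinite group $\langle h\rangle$ fixes $x_0$, contradicting properness of the $H$-action. Consequently each $t_i$ is hyperbolic, so by the structure of minimal sets of hyperbolic isometries \cite[Theorem~II.6.8]{bridson1999metric}, $\mathrm{Min}(t_i)$ is isometric to a product $Y_i \times \mathbb{R}$ on which $t_i$ acts as the identity on $Y_i$ and by a nontrivial translation on the $\mathbb{R}$-factor, and the centralizer of $t_i$ — in particular, all of the abelian group $H$ — preserves this splitting and acts diagonally with respect to it.

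Next I would verify that $W := \bigcap_{i=1}^{r} \mathrm{Min}(t_i)$ is non-empty; it is automatically closed, convex, and $H$-invariant. This follows from the standard fact that pairwise-commuting semisimple isometries of a complete $\mathrm{CAT}(0)$ space have a common point lying in all of their minimal sets, proved by iterating \cite[Theorem~II.6.8]{bridson1999metric}: $t_{i+1}$ preserves $\mathrm{Min}(t_i) = Y_i \times \mathbb{R}$ and restricts to a semisimple isometry of the factor $Y_i$, whose (non-empty) minimal set, crossed with $\mathbb{R}$, meets $\mathrm{Min}(t_i) \cap \mathrm{Min}(t_{i+1})$, and so on. Since $W \subseteq \mathrm{Min}(t_i)$ for every $i$, the displacement function $x \mapsto d_X(x,t_i x)$ is constant on $W$, equal to $|t_i|_X$; that is, each $t_i$ acts on the complete $\mathrm{CAT}(0)$ space $W$ as a Clifford translation. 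Now process $t_1,\dots,t_r$ one at a time: each $t_i$, being a Clifford translation of $W$, has minimal set all of $W$, so \cite[Theorem~II.6.8]{bridson1999metric} splits off an $\mathbb{R}$-factor on which $t_i$ translates while fixing the complement; and because the $t_j$ commute, the later $t_j$ preserve the splittings already produced, their induced actions on the surviving factor being again Clifford translations — hence either trivial or hyperbolic, never infinite-order elliptic, which is what lets the induction proceed. In the end $W$ is split isometrically as $Z \times \mathbb{R}^k$ with $k \le r$, on which $H$ acts trivially on the $Z$-factor and by translations on $\mathbb{R}^k$; let $\phi : H \to \mathbb{R}^k$ be the associated translation homomorphism.

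It remains to see that $\phi(H)$ is a lattice in its span. The map $\phi$ is injective, since an element of its kernel acts trivially on $W$ and hence fixes a point of $X$, forcing it to be trivial by the opening paragraph; thus $\phi(H) \cong \mathbb{Z}^r$. It is also discrete: by properness there is $\epsilon > 0$ such that only finitely many $h \in H$ satisfy $d_X(h\cdot(z_0,0),(z_0,0)) < 2\epsilon$ for a fixed basepoint $(z_0,0) \in Z \times \mathbb{R}^k$, and this distance equals $|\phi(h)|$, so $\phi(H)$ meets the $\epsilon$-ball about $0$ finitely. A discrete subgroup of $\mathbb{R}^k$ isomorphic to $\mathbb{Z}^r$ spans an $r$-dimensional linear subspace $V$ and is a lattice in $V$. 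Therefore $F := \{z_0\} \times V \subseteq Z \times \mathbb{R}^k = W \subseteq X$ is an $r$-dimensional flat; it is preserved by $H$, which fixes $z_0$ and translates $V$ by the vectors of $\phi(H) \subseteq V$; and $H$ acts on $F$ precisely as the lattice $\phi(H)$ of translations, which is the assertion.

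I expect the crux to lie in the second paragraph — showing $W \neq \emptyset$ with the stated product structure. Non-emptiness rests on pairwise-commuting semisimple isometries sharing a point in all their minimal sets (itself an inductive restriction argument), and the product decomposition on tracking the Clifford-translation property through the successive splittings; the essential point is that this property, unlike plain semisimplicity, passes to restrictions, so the induction never stalls on an infinite-order elliptic restriction. What follows — injectivity and discreteness of $\phi$ from properness, and the fact that a discrete copy of $\mathbb{Z}^r$ in $\mathbb{R}^k$ is a lattice in its $r$-dimensional span — is routine.
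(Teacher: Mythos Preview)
The paper does not give its own proof of this statement; it simply cites it as \cite[Theorem~II.7.1]{bridson1999metric} and moves on. Your proposal is a faithful sketch of the standard Bridson--Haefliger argument: pass to the common minimal set $W=\bigcap_i\mathrm{Min}(t_i)$ (non-empty by projecting minimal sets onto one another, using that commuting isometries preserve each other's minimal sets), observe that the generators act as Clifford translations there, iteratively split off Euclidean factors via \cite[Theorem~II.6.8]{bridson1999metric}, and then read off the lattice structure of the translation homomorphism from properness. The reasoning is correct, including the point you flag as the crux---that the Clifford property (unlike mere semisimplicity) passes to restrictions on the successive complementary factors, so the induction does not stall. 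Since the paper offers no argument of its own, there is nothing to contrast; your write-up is essentially what one finds in Bridson--Haefliger.
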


\subsection{Thick flats}

A closed convex subspace $Y \subset X$ together with an isometry $\varphi: Y \rightarrow Z \times \mathbb{R}^r$, where $r \geq 0$ and~$Z$ is some complete $\mathrm{CAT}(0)$ space, is called a {\it thick flat} of dimension $r$ in $X$. We say a group~$H$ acting isometrically on~$X$ {\it preserves} the thick flat $(Y, \varphi)$ if $H$ preserves $Y$. Such a group~$H$ {\it acts as a lattice of translations} on the thick flat $(Y, \varphi)$ if $H$ acts diagonally with respect to the decomposition $Z \times \mathbb{R}^r$, acting neutrally on the first factor and by translations on the second, so that the induced map $H \rightarrow \mathbb{R}^r$ embeds $H$ as a lattice of $\mathbb{R}^r$. 

\section{Lemmata}\label{lemmas}
Lemmas \ref{canonical} and \ref{product} are probably well known, but we include their proofs for completeness. The objective is to determine the canonical attracting fixed point of a ballistic isometry acting diagonally on a product.

\begin{lemma}\label{canonical}
Let $Y, Z$ be complete $\mathrm{CAT}(0)$ spaces and $X = Y \times Z$. Suppose $g_Y \in \mathrm{Isom}(Y)$ is neutral and $g_Z \in \mathrm{Isom}(Z)$ is hyperbolic, and let $g, g' \in \mathrm{Isom}(X)$ be the isometries $g_Y \times g_Z$, $\mathrm{Id}_Y \times g_Z$ of $X$, respectively. Then $\omega_g = \omega_{g'}$. 
\end{lemma}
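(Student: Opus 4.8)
The plan is to exhibit a single explicit geodesic ray that represents both $\omega_g$ and $\omega_{g'}$. Since $g_Z$ is hyperbolic it admits an axis $\ell = \gamma(\mathbb{R})$, where $\gamma \colon \mathbb{R} \to Z$ is a unit-speed geodesic oriented so that $g_Z \gamma(t) = \gamma(t+\lambda)$ with $\lambda := |g_Z|_Z > 0$. Fix any $y_0 \in Y$ and put $x_0 := (y_0, \gamma(0)) \in X$. By the product formula for translation lengths, $|g|_X = \sqrt{|g_Y|_Y^2 + |g_Z|_Z^2} = \lambda > 0$ and likewise $|g'|_X = \lambda > 0$, so both $g$ and $g'$ are ballistic and $\omega_g, \omega_{g'}$ are defined. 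Moreover $g'$ preserves the geodesic line $\{y_0\} \times \ell$ and acts there as a hyperbolic isometry translating by $\lambda$ towards $\omega_{g'}$; hence the ray $r \colon [0,\infty) \to X$, $r(t) := (y_0, \gamma(t))$, is the geodesic ray $[x_0, \omega_{g'})$ from $x_0$ in the class of $\omega_{g'}$.

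The one point requiring a little care is that neutrality of $g_Y$ forces the orbit $(g_Y^n y_0)_n$ to grow sublinearly, i.e. $d_Y(y_0, g_Y^n y_0) = o(n)$. This does not follow from bounding a single orbit by $n\, d_Y(y_0, g_Y y_0)$, since that displacement need not be small; rather, $n \mapsto d_Y(y_0, g_Y^n y_0)$ is subadditive, so $\ell(g_Y) := \lim_n d_Y(y_0, g_Y^n y_0)/n$ exists, is independent of the basepoint, and satisfies $\ell(g_Y) \le d_Y(y, g_Y y)$ for every $y \in Y$; taking the infimum over $y$ gives $0 \le \ell(g_Y) \le |g_Y|_Y = 0$.

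Next I would compare geodesics directly. We have $g^n x_0 = (g_Y^n y_0, \gamma(n\lambda))$; writing $L_n := d_X(x_0, g^n x_0) = \sqrt{d_Y(y_0, g_Y^n y_0)^2 + (n\lambda)^2}$, the arclength-parametrized geodesic $c_n := [x_0, g^n x_0]$ has, at parameter $t$, its $Y$-coordinate at distance $\frac{t}{L_n} d_Y(y_0, g_Y^n y_0)$ from $y_0$ and its $Z$-coordinate equal to $\gamma(\frac{t}{L_n} n\lambda)$. The sublinear growth gives $d_Y(y_0, g_Y^n y_0)/(n\lambda) \to 0$, hence $L_n / (n\lambda) \to 1$; so for each fixed $T > 0$ and all $t \in [0,T]$ (and $n$ large enough that $L_n \ge T$) the $Y$-coordinate of $c_n(t)$ lies within $\frac{T}{n\lambda} d_Y(y_0, g_Y^n y_0)$ of $y_0$ and its $Z$-coordinate lies within $T\,|1 - n\lambda/L_n|$ of $\gamma(t)$, both bounds tending to $0$ independently of $t$. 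Hence $c_n \to r$ uniformly on compact intervals.

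On the other hand, $g^n x_0 \to \omega_g$ in the cone topology by \cite{caprace2009isometry}, which by the characterization of convergence recalled in Section~\ref{prelim} means exactly that $c_n = [x_0, g^n x_0] \to [x_0, \omega_g)$ uniformly on compact intervals. By uniqueness of limits, $[x_0, \omega_g) = r = [x_0, \omega_{g'})$, and since a geodesic ray determines its endpoint at infinity, $\omega_g = \omega_{g'}$. The only genuine obstacle is the sublinear-growth statement in the second paragraph; everything else is routine product geometry and the cone-topology characterization of convergence.
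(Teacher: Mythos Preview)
Your proof is correct and follows essentially the same route as the paper's: pick a basepoint on an axis of $g_Z$, write the geodesic $[x_0,g^n x_0]$ explicitly in the product, and use sublinear growth $d_Y(y_0,g_Y^n y_0)/n \to 0$ (which the paper invokes in the same way) to see uniform-on-compacta convergence to the ray $t \mapsto (y_0,\gamma(t))$. The only cosmetic difference is that the paper first reduces to $Z=\mathbb{R}$ while you work directly with an axis in $Z$, and you spell out the subadditivity argument for the stable translation length that the paper leaves implicit.
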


\begin{proof}
There exist a geodesic line $\gamma_Z: \mathbb{R} \rightarrow Z$ in $Z$ and a positive number $\ell$ such that $g_Z(\gamma_Z(t)) = \gamma_Z(t + \ell)$ for any $t \in \mathbb{R}$. The point $\omega_{g'} \in \partial X$ is represented by a geodesic ray of the form~$(y_0, \gamma_Z(t))$, $t \geq 0$, $y_0 \in Y$. Thus, we reduce to the case that $Z = \mathbb{R}$ and $g_Z$ is a translation by~$\ell > 0$. Setting~$x_0 = (y_0, 0)$, we show that the geodesics $\gamma^{(n)}$ in $X$ joining $x_0$ to $g^nx_0$ converge uniformly on compact subsets as $n \rightarrow \infty$ to the geodesic ray $\gamma: [0, \infty) \rightarrow X$ given by $t \mapsto (y_0, t)$. 

To that end, write $\gamma^{(n)}(t) = (\gamma_Y^{(n)}(t), \alpha_n t)$, where $\alpha_n > 0$ and $\gamma^{(n)}_Y$ is a linearly reparameterized geodesic in $Y$ joining $y_0$ to $g_Y^ny_0$, and let $R > 0$. Note that the maximum value of $d_X(\gamma(t), \gamma^{(n)}(t))$ on $[0, R]$ is attained at $t = R$; indeed, for $0 \leq t \leq R$, we have
\begin{equation*}
d_X(\gamma(t), \gamma^{(n)}(t))^2 = d_Y(y_0, \gamma_Y^{(n)}(t))^2 + t^2(1-\alpha_n)^2.
\end{equation*} 
Thus, it suffices to show that $d_X(\gamma(R), \gamma^{(n)}(R)) \rightarrow 0$. This will follow if we can show that $d_Y(y_0, \gamma_Y^{(n)}(R)) \rightarrow 0$ since 
\begin{equation*}
R^2 = d_X(x_0, \gamma^{(n)}(R))^2 = d_Y(y_0, \gamma^{(n)}_Y(R))^2 + \alpha_n^2 R^2.
\end{equation*}
To see that $d_Y(y_0, \gamma_Y^{(n)}(R)) \rightarrow 0$, note that since $\gamma^{(n)}_Y$ is a linearly reparameterized geodesic, we have
\begin{equation*}
\frac{d_Y(y_0, \gamma_Y^{(n)}(R)) }{d_Y(y_0, g_Y^n y_0)} = \frac{R}{d_X(x_0, g^nx_0)}
\end{equation*}
and so
\begin{alignat*}{4}
d_Y(y_0, \gamma_Y^{(n)}(R))^2 &= R^2 \frac{d_Y(y_0, g_Y^ny_0)^2}{d_X(x_0, g^nx_0)^2} \\
&= R^2 \frac{d_Y(y_0, g_Y^ny_0)^2}{d_Y(y_0, g_Y^ny_0)^2 + n^2 \ell^2} \\
&= R^2 \frac{\left( \frac{d_Y(y_0, g_Y^ny_0)}{n} \right)^2}{ \left( \frac{d_Y(y_0, g_Y^ny_0)}{n} \right)^2 + \ell^2}.
\end{alignat*}
Now the latter approaches $0$ as $n \rightarrow 0$ since
\begin{equation*}
\lim_{n\rightarrow \infty}\frac{d_Y(y_0, g_Y^ny_0)}{n} \leq |g_Y|_Y
\end{equation*}
and $|g_Y|_Y = 0$ by assumption. 
\end{proof}

\begin{lemma}\label{product}
Let $X_1, X_2$ be complete $\pi$-visible CAT(0) spaces, let $g_i \in \mathrm{Isom}(X_i)$ for $i=1,2$, and suppose $g_1$ is ballistic. Let $X = X_1 \times X_2$ and let $g = g_1 \times g_2 \in \mathrm{Isom}(X)$. Then $g$ acts ballistically on $X$ and
\[
\omega_g = (\mathrm{arctan}(|g_2|/|g_1|), \omega_{g_1}, \omega_{g_2})
\]
in the spherical join $\partial X_1 * \partial X_2 = \partial X$.
\end{lemma}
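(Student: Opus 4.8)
The plan is to compute $|g|_X$ to see that $g$ is ballistic, and then to pin down $\omega_g$ by tracing the geodesic segments $[x,g^nx]$ through the product structure of $X$ and of $\partial X$.

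I would begin by recalling two standard facts about $X=X_1\times X_2$ (see \cite{bridson1999metric}): a geodesic in $X$ is a pair of geodesics, one in each factor, carrying a common affine parameter; and $\partial X$ is the spherical join $\partial X_1*\partial X_2$, in which a geodesic ray of the form $t\mapsto(\rho_1(t\cos\theta),\rho_2(t\sin\theta))$ — with $\theta\in[0,\pi/2]$ and each $\rho_i$ a unit-speed geodesic ray in $X_i$, reduced to the constant map when the corresponding coefficient $\cos\theta$ or $\sin\theta$ vanishes — represents the point $(\theta,[\rho_1],[\rho_2])$. Ballisticity is then immediate: from the identity $d_X((y_1,y_2),g(y_1,y_2))^2=d_{X_1}(y_1,g_1y_1)^2+d_{X_2}(y_2,g_2y_2)^2$, taking infima over the two coordinates separately gives $|g|_X^2=|g_1|_{X_1}^2+|g_2|_{X_2}^2\geq|g_1|_{X_1}^2>0$, so $g$ is ballistic and $\omega_g\in\partial X$ is defined.

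Now fix a basepoint $x=(x_1,x_2)$ and set $a_n=d_{X_1}(x_1,g_1^nx_1)$, $b_n=d_{X_2}(x_2,g_2^nx_2)$, and $L_n=d_X(x,g^nx)=(a_n^2+b_n^2)^{1/2}$. By the first recalled fact, the unit-speed geodesic $[x,g^nx]$ is the curve $t\mapsto\bigl(\sigma_1^{(n)}(\tfrac{a_n}{L_n}t),\,\sigma_2^{(n)}(\tfrac{b_n}{L_n}t)\bigr)$ on $[0,L_n]$, where $\sigma_i^{(n)}$ is the unit-speed geodesic from $x_i$ to $g_i^nx_i$. Since $g_1$ is ballistic, $g_1^nx_1\to\omega_{g_1}$ in the cone topology, which by definition means that $\sigma_1^{(n)}$ converges, uniformly on compact intervals, to the unit-speed ray $\rho_1$ from $x_1$ representing $\omega_{g_1}$; likewise $\sigma_2^{(n)}\to\rho_2$ when $g_2$ is ballistic. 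Using $\tfrac1n d_{X_i}(x_i,g_i^nx_i)\to|g_i|_{X_i}$, we get $\tfrac{a_n}{L_n}\to\cos\theta_0$ and $\tfrac{b_n}{L_n}\to\sin\theta_0$, where $\theta_0:=\arctan(|g_2|_{X_2}/|g_1|_{X_1})\in[0,\pi/2)$. Combining these, for each fixed $t$ the point $[x,g^nx](t)$ converges to $(\rho_1(t\cos\theta_0),\rho_2(t\sin\theta_0))$; when $g_2$ is neutral one has $\tfrac{b_n}{L_n}\leq\tfrac{b_n}{a_n}\to 0$, so the second coordinate simply tends to $x_2$, consistently with $\sin\theta_0=0$ (and then the join point $(0,\omega_{g_1},\cdot)$ is just $\omega_{g_1}$, so $\omega_{g_2}$ need not even be defined). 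This convergence is in fact uniform on compact intervals, since the factor maps converge so while $\tfrac{a_n}{L_n}t$ and $\tfrac{b_n}{L_n}t$ stay bounded as $n\to\infty$. Therefore $[x,g^nx]$ converges uniformly on compact intervals to the ray $t\mapsto(\rho_1(t\cos\theta_0),\rho_2(t\sin\theta_0))$. On one hand this limiting ray represents $\omega_g$, because $g^nx\to\omega_g$ in the cone topology; on the other, by the join description recalled above it represents $(\theta_0,\omega_{g_1},\omega_{g_2})$. Hence $\omega_g=(\arctan(|g_2|/|g_1|),\omega_{g_1},\omega_{g_2})$, as claimed.

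I expect the only genuinely delicate point to be the interchange of limits in $n$ and $t$ in the last step: because the reparameterization ratios $a_n/L_n$ and $b_n/L_n$ move with $n$, one must check that $\tfrac{a_n}{L_n}t$ and $\tfrac{b_n}{L_n}t$ stay in a fixed compact interval (for fixed $t$) before invoking the convergence $\sigma_i^{(n)}\to\rho_i$ on compact intervals. Bundling this with the separate, degenerate treatment of a neutral $g_2$ accounts for essentially all of the remaining work, none of it deep.
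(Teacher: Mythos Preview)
Your argument is correct, and it takes a genuinely different route from the paper's. The paper exploits the $\pi$-visibility hypothesis via Duchesne's splitting (Theorem~\ref{duchesne}): for each ballistic $g_i$ it passes to an invariant subspace $Y_i\times\mathbb{R}$ on which $g_i$ acts neutrally on $Y_i$ and by translation on $\mathbb{R}$, then invokes Lemma~\ref{canonical} to replace each $g_i$ by the pure translation $g_i'$ without changing the attracting fixed point; after this reduction the claim becomes plane geometry on a 2-flat. Your approach instead works directly with the cone topology and the join description of $\partial(X_1\times X_2)$, tracking the reparametrized factor geodesics $\sigma_i^{(n)}$ and the ratios $a_n/L_n$, $b_n/L_n$; this avoids both Duchesne's theorem and Lemma~\ref{canonical}, and in fact never uses $\pi$-visibility, so you actually prove a slightly stronger statement. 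The trade-off is that the paper's reduction lets one read off the answer from a picture in $\mathbb{R}^2$, whereas your proof must manage the uniform-on-compacta limit by hand---exactly the ``delicate point'' you flagged, and which you handle correctly since $a_n/L_n,\,b_n/L_n\in[0,1]$ keep the arguments in a fixed compact interval. Note also that your direct approach essentially subsumes Lemma~\ref{canonical} as the special case $X_1=Z$, $X_2=\mathbb{R}$.
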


\begin{proof}
We suppose first that $g_1, g_2$ are both ballistic, so that we may assume that $X_i $ admits a decomposition $X_i = Y_i \times Z_i$ with respect to which $g_i$ acts diagonally, where $Z_i$ is isometric to~$\mathbb{R}$, and where $g_i$ acts neutrally on the first factor and acts by a translation of $|g_i|$ on the second factor. Let $g_i' \in \mathrm{Isom}(X_i)$ be the product of the identity on $Y_i$ with the translation by $|g_i|$ on $Z_i$, and let $g' = g'_1 \times g'_2 \in \mathrm{Isom}(X)$. Note we have $|g_i| = |g_i'|$, and by Lemma \ref{canonical}, we have $\omega_{g_i} = \omega_{g_i'}$. Moreover, by viewing $X$ as the product $X = (Y_1 \times Y_2) \times (Z_1 \times Z_2)$, we also have $\omega_g = \omega_{g'}$ by Lemma \ref{canonical}. Thus, to establish the lemma, it suffices to show 
\[
\omega_{g'} = (\mathrm{arctan}(|g'_1|/|g'_2|), \omega_{g'_1}, \omega_{g'_2})
\]
but this follows from plane geometry since $g_1', g_2'$ preserve and act as translations on the 2-dimensional flat $\{(y_1, y_2)\} \times (Z_1 \times Z_2) \subset X$, where $y_i$ is any point in $Y_i$. 

If $g_2$ is neutral, then we may only assume that $X_1$ admits a decomposition $X_1 = Y_1 \times Z_1$ as above, and now the lemma follows immediately from Lemma \ref{canonical} by viewing $X$ as the product $X = (Y_1 \times X_2) \times Z_1$.
\end{proof}

We apply Lemma \ref{product} to the special case of matrices acting on symmetric spaces.

\begin{lemma}\label{triangular}
Let $M$ be a symmetric space associated to $\mathrm{GL}_n(\mathbb{C})$ and let $g \in \mathrm{GL}_n(\mathbb{C})$ be of the form
\[
g = \mathrm{diag}(\lambda_1 U_1, \ldots, \lambda_m U_m)
\]
where $\lambda_1, \ldots, \lambda_m \in \mathbb{C}^*$ with $|\lambda_k| \neq 1$ for at least one $k \in \{1, \ldots, m\}$, and $U_k \in \mathrm{SL}_{n_k}(\mathbb{C})$ is an upper unitriangular matrix for $k \in \{1, \ldots, m\}$. Then $g$ acts ballistically on $M$ and has the same canonical attracting fixed point as
\[
g' := \mathrm{diag}(\lambda_1 I_{n_1}, \ldots, \lambda_m I_{n_m})
\]
on $\partial M$. The same statement holds when $\mathrm{GL}_n(\mathbb{C})$ is replaced with $\mathrm{SL}_n(\mathbb{C})$.
\end{lemma}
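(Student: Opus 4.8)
The plan is to realize $M$ as the symmetric space $\mathcal{P}_n$ of positive-definite Hermitian $n \times n$ matrices, with $\mathrm{GL}_n(\mathbb{C})$ acting by $g \cdot P = gPg^{*}$, to locate a totally geodesic subspace $N \subset M$ that is preserved by both $g$ and $g'$ and that splits as a metric product to which Lemma~\ref{product} applies, and then to transfer the conclusion from $N$ back to $M$.

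To construct $N$, write $|\lambda_1|, \dots, |\lambda_m|$ as the distinct values $\mu_1 > \dots > \mu_s$, put $J_j = \{ k : |\lambda_k| = \mu_j \}$, and let $W_j \subseteq \mathbb{C}^n$ be the sum of the coordinate blocks indexed by $J_j$, so that $\mathbb{C}^n = W_1 \oplus \dots \oplus W_s$ with $N_j := \dim W_j$. Since $g$ and $g'$ are block diagonal, they preserve each $W_j$, hence they preserve the totally geodesic submanifold $N := \{ P \in M : W_i \perp_P W_j \text{ for } i \neq j \}$, which is isometric to the product $\prod_j \mathcal{P}_{N_j}$. Splitting off the one-dimensional flat (scalar) de Rham factor of each $\mathcal{P}_{N_j}$ gives an isometry $N \cong \mathbb{R}^s \times P$, where $P := \prod_j \mathcal{P}^1_{N_j}$ and $\mathcal{P}^1_{N_j}$ is the symmetric space of $\mathrm{SL}_{N_j}(\mathbb{C})$.

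Under this identification I expect that $g|_N = \tau_v \times \nu$ and $g'|_N = \tau_{v'} \times \nu'$, where $\tau_v, \tau_{v'}$ are translations of $\mathbb{R}^s$ whose $j$-th coordinate is proportional to $\log|\det(g_j)| = \log|\det(g'_j)| = N_j \log \mu_j$ (these agree because each $\det U_k = 1$, so $v = v'$, and $v \neq 0$ since some $\mu_j \neq 1$), while $\nu$, resp.\ $\nu'$, is the isometry induced on $P$ by $g$, resp.\ $g'$; on the $j$-th factor $\mathcal{P}^1_{N_j}$ this induced isometry is the one given by $g_j$, resp.\ $g'_j$, which, being a matrix all of whose eigenvalues have the same modulus $\mu_j$, projects to $\mathrm{PGL}_{N_j}(\mathbb{C})$ as the class of a matrix with all eigenvalues of modulus $1$. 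By the standard fact that the translation length of a matrix on its symmetric space equals that of the hyperbolic part of its multiplicative Jordan decomposition \cite{eberlein1996geometry}, such matrices act as neutral isometries; hence $\nu$ and $\nu'$ are neutral. Granting this description, Lemma~\ref{product}, applied inside $N = \mathbb{R}^s \times P$ with ballistic factor $\tau_v$ (a nontrivial translation of $\mathbb{R}^s$) and neutral second factor $\nu$, resp.\ $\nu'$, shows that $g$ and $g'$ act ballistically on $N$ with $\omega_{g|_N} = \omega_{g'|_N} = \omega_{\tau_v} \in \partial \mathbb{R}^s \subseteq \partial N$ (the angle $\arctan(|\nu|/|\tau_v|)$ appearing in that lemma being $0$).

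It remains to pass from $N$ to $M$. First, $g$ acts ballistically on $M$: the matrix $g'$ is diagonalizable, hence a semisimple isometry with $|g'|_M > 0$, and $g$ commutes with $g'$ (they commute blockwise), so $g$ fixes $\omega_{g'} \in \partial M$; writing $g = g' g^{(1)}$ with $g^{(1)} := \mathrm{diag}(U_1, \dots, U_m)$ unipotent and therefore neutral, the Busemann function centered at $\omega_{g'}$ is displaced by $g$ by the nonzero constant that $g'$ contributes, forcing $|g|_M \geq |g'|_M > 0$. Now $N$, being totally geodesic, is a closed convex subspace of $M$ along which distances and geodesics agree with those of $M$; consequently, for $x \in N$ the convergences $g^k x \to \omega_{g|_N}$ and $(g')^k x \to \omega_{g'|_N}$ (valid in $\overline{N}$ since $g,g'$ are ballistic on $N$) hold also in $\overline{M}$, and since $g,g'$ are ballistic on $M$ these limits must be $\omega_g$ and $\omega_{g'}$, so $\omega_g = \omega_{g|_N} = \omega_{g'|_N} = \omega_{g'}$. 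The case of $\mathrm{SL}_n(\mathbb{C})$, whose symmetric space is $\mathcal{P}^1_n = \mathcal{P}_n/\mathbb{R}$, follows by applying the case just proved to $g, g' \in \mathrm{SL}_n(\mathbb{C}) \subseteq \mathrm{GL}_n(\mathbb{C})$ acting on $\mathcal{P}_n = \mathbb{R} \times \mathcal{P}^1_n$ (trivially on the $\mathbb{R}$ factor) and noting, via Lemma~\ref{product}, that $\omega_g$ and $\omega_{g'}$ then lie in $\partial\mathcal{P}^1_n$. The step I expect to require the most care is the input that a matrix with all eigenvalues of modulus $1$ acts neutrally on its symmetric space — this is where the hypothesis involving unipotence ultimately enters, and it rests on the Jordan-decomposition structure of isometries of symmetric spaces — together with the bookkeeping that identifies $g|_N$ and $g'|_N$ as the asserted product isometries; the remaining ingredients (that the block-orthogonal locus $N$ is totally geodesic and metrically a product, the Busemann-function argument for ballisticity on $M$, and the transfer of attracting points across the convex subspace $N$) are routine.
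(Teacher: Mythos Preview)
Your proof is correct and follows essentially the same strategy as the paper's: pass to the block-diagonal totally geodesic subspace, split off the scalar (Euclidean) direction from each block, observe that $g$ and $g'$ act by the \emph{same} translation on the Euclidean factor and neutrally on the complementary factor, and invoke Lemma~\ref{product}. Two minor points of comparison. First, the paper works directly with the $m$ blocks $\lambda_k U_k$ rather than grouping them by $|\lambda_k|$; this sidesteps your appeal to the Jordan decomposition, since on the $k$-th $\mathrm{SL}_{n_k}$-factor the element $\lambda_k U_k$ acts exactly as the unipotent $U_k$ does, and unipotents are trivially neutral. Your grouping is harmless but gains nothing. Second, you are more careful than the paper about transferring ballisticity and the attracting point from the subspace back to $M$; your Busemann argument is fine, though it can be replaced by the observation that for $x \in N$ the stable translation length $\lim_n d(x,g^nx)/n$ computes both $|g|_M$ and $|g|_N$, so $|g|_M = |g|_N > 0$.
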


\begin{proof}
For $k = 1, \ldots, m$, let $X, X_k, Y_k, Z_k$ be the projections of the subgroups
\begin{alignat*}{4}
& \{ \mathrm{diag}(h_1, \ldots, h_m) \> : \> h_k \in \mathrm{GL}_{n_k}(\mathbb{C}) \} \\
& \{ \mathrm{diag}(I_{n_1}, \ldots, I_{n_{k-1}}, \> h, \> I_{n_{k+1}}, \ldots, I_{n_m}) \> : \> h \in \mathrm{GL}_{n_k}(\mathbb{C}) \} \\
& \{ \mathrm{diag}(I_{n_1}, \ldots, I_{n_{k-1}}, \> h, \> I_{n_{k+1}}, \ldots, I_{n_m}) \> : \> h \in \mathrm{SL}_{n_k}(\mathbb{C}) \} \\
& \{ \mathrm{diag}(I_{n_1}, \ldots, I_{n_{k-1}}, \>  e^t I_{n_k} , \> I_{n_{k+1}}, \ldots, I_{n_m}) \> : \> t \in \mathbb{R} \}
\end{alignat*}
of $\mathrm{GL}_n(\mathbb{C})$ to $M$ under the quotient map $\mathrm{GL}_n(\mathbb{C}) \rightarrow M  = \mathrm{GL}_n(\mathbb{C})/\mathrm{U}(n)$, respectively. Then~$X$ is a closed convex subspace of $M$ admitting a decomposition $X = \prod_{k=1}^m X_k$. The subspace~$X_k$ in turn admits a decomposition $X_k = Y_k \times Z_k$, and the factor $Z_k$ is isometric to $\mathbb{R}$. Each of the isometries $g, g'$ preserves $X$ and acts diagonally with respect to the decomposition $X = \prod_{k=1}^m X_k$. On each factor $X_k$, each of $g, g'$ also acts diagonally with respect to the decomposition $X_k = Y_k \times Z_k$, acting neutrally on the first factor and as a translation by  $\alpha_k\ln|\lambda_k|$ on the second for some $\alpha_k > 0$. Thus, the lemma follows from a repeated application of Lemma \ref{product}.

To see that the lemma remains true when $\mathrm{GL}_n(\mathbb{C})$ is replaced with $\mathrm{SL}_n(\mathbb{C})$, note that a symmetric space for $\mathrm{SL}_n(\mathbb{C})$ embeds as a closed convex $\mathrm{SL}_n(\mathbb{C})$-invariant subspace of a symmetric space for $\mathrm{GL}_n(\mathbb{C})$.
\end{proof}

We now observe that a collection of pairwise commuting matrices over $\mathbb{C}$ can be simultaneously put into the form described in Lemma \ref{triangular}.

\begin{lemma}\label{triangularize}
Let $K$ be an algebraically closed field and let $h_\alpha \in \mathrm{M}_n(K)$ be a collection of pairwise commuting matrices. Then there are $s \in \mathbb{N}$ and $C \in \mathrm{SL}_n(K)$ such that
\[
Ch_\alpha C^{-1} = \mathrm{diag}(h_{\alpha,1}, \ldots, h_{\alpha,s})
\]
where $h_{\alpha, \ell} \in \mathrm{M}_{n_\ell}(K)$ is upper triangular and has a single eigenvalue for $\ell = 1, \ldots, s$. 
\end{lemma}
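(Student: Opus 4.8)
The plan is to simultaneously upper-triangularize the family $\{h_\alpha\}$ and then group together the resulting block structure according to eigenvalues. First I would handle the simultaneous triangularization: since the $h_\alpha$ pairwise commute and $K$ is algebraically closed, a standard argument (induction on $n$, using that the commuting family has a common eigenvector) produces a single $C_0 \in \mathrm{GL}_n(K)$ with $C_0 h_\alpha C_0^{-1}$ upper triangular for every $\alpha$; rescaling $C_0$ gives $C_0 \in \mathrm{SL}_n(K)$ if desired. The common-eigenvector step is the familiar fact that if $v$ is an eigenvector of some fixed $h_{\alpha_0}$ with eigenvalue $\mu$, then each $h_\beta$ preserves the eigenspace $\ker(h_{\alpha_0}-\mu I)$ (by commutativity), so one restricts the whole family to that subspace, finds a common eigenvector there, extends, and passes to the quotient by induction.

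Next I would organize the diagonal entries by eigenvalue. After conjugating by $C_0$, each $h_\alpha$ is upper triangular with diagonal $(\chi_{\alpha,1}, \ldots, \chi_{\alpha,n})$, where $i \mapsto \chi_{\alpha,i}$ records the $i$-th eigenvalue. The relevant equivalence relation on $\{1,\ldots,n\}$ is: $i \sim j$ if $\chi_{\alpha,i} = \chi_{\alpha,j}$ for all $\alpha$. Because the triangularization can be carried out so that, within the induction, one always peels off eigenvectors in a consistent order, one can arrange for each equivalence class to consist of \emph{consecutive} indices; concretely, one reorders the basis so that indices in the same class are grouped into a contiguous block, and one checks that a simultaneous permutation of rows and columns of upper-triangular matrices by a permutation that is "order-preserving on each block's complement" keeps them upper triangular — or, more cleanly, one simply re-runs the triangularization induction choosing common eigenvectors so as to exhaust one eigenvalue-class at a time. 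Letting $P$ be this permutation matrix (of determinant $\pm 1$; absorb the sign into one block by negating a basis vector) and $C = PC_0$, the matrices $Ch_\alpha C^{-1}$ are block-diagonal with blocks $h_{\alpha,\ell} \in \mathrm{M}_{n_\ell}(K)$, each upper triangular, and $h_{\alpha,\ell}$ has all diagonal entries equal — hence a single eigenvalue — by construction of the classes.

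The main obstacle, and the only point requiring genuine care rather than citation, is ensuring that the eigenvalue-classes can be made to correspond to \emph{contiguous} blocks while keeping every $h_\alpha$ simultaneously upper triangular; a careless reordering can destroy upper-triangularity of some members of the family. The cleanest route is to fold this requirement into the inductive construction: at each stage, among all common eigenvectors of the current restricted family, choose one whose eigenvalue-tuple $(\chi_{\alpha})_\alpha$ agrees with that of the already-chosen basis vectors of the current block whenever such a vector exists (it does, as long as the current block is "incomplete"), and otherwise start a new block. This guarantees the flag $0 = V_0 \subset V_1 \subset \cdots \subset V_n = K^n$ underlying the triangularization has the property that each $V_i/V_{i-1}$ carries the tuple $\chi_\alpha$ for the appropriate class, with classes appearing in runs. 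Finally, one notes the statement is insensitive to replacing $\mathrm{GL}_n$ by $\mathrm{SL}_n$: scaling $C$ by an $n$-th root of its determinant (available since $K$ is algebraically closed) normalizes the determinant to $1$ without changing the conjugation action or the block structure.
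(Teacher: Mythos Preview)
Your argument has a genuine gap at the step where you claim that $Ch_\alpha C^{-1}$ is block-\emph{diagonal}. Your construction---triangularize first, then arrange the flag so that each eigenvalue-tuple appears in a contiguous run---only yields matrices that are block-\emph{upper}-triangular with single-eigenvalue diagonal blocks. Nothing in the construction forces the off-diagonal blocks to vanish. Already for a single matrix this fails: take $A = \begin{pmatrix} 1 & 1 \\ 0 & 2 \end{pmatrix}$. Your greedy procedure picks $v_1 = e_1$ (eigenvalue $1$), then any lift $v_2$ of a generator of $K^2/\langle e_1\rangle$ (eigenvalue $2$ in the quotient); the obvious lift $v_2 = e_2$ reproduces $A$ itself, which is not block-diagonal. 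The issue is that a common eigenvector \emph{in the quotient} with tuple $\chi$ need not lift to a vector in the simultaneous generalized eigenspace $V_\chi \subset K^n$, and it is precisely membership in $V_\chi$ that you need for the off-diagonal blocks to be zero.

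The missing ingredient is that the $V_\chi$ are \emph{invariant} subspaces giving a direct-sum decomposition $K^n = \bigoplus_\chi V_\chi$, so that a basis adapted to this decomposition (and triangularized within each summand) automatically produces block-diagonal matrices. The paper's proof builds this in from the start: it inducts on $n$, and when some $h$ in the family has at least two eigenvalues, it splits $K^n$ along the generalized eigenspaces of $h$---which every $h_\alpha$ preserves by commutativity---obtaining a genuine block-diagonal structure before any triangularization happens. Only in the base case, when every $h_\alpha$ already has a single eigenvalue, does it invoke simultaneous triangularization. Your approach can be repaired by invoking the $V_\chi$ decomposition explicitly and choosing the lifts inside the appropriate $V_\chi$, but at that point you are essentially carrying out the paper's argument in a different order.
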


\begin{proof}
Since $K$ is algebraically closed, it suffices to find such $C \in \mathrm{GL}_n(K)$; indeed, we may ultimately replace $C$ with $\mu C$, where $\mu$ is an $n^{\text{th}}$ root of $1/\det(C)$. We now proceed by induction on $n$. The case $n=1$ is trivial. Now let $n > 1$ and suppose the above claim has been established for matrices of smaller dimension. If each of the $h_\alpha$ has a single eigenvalue, then the statement follows from the fact that any collection of pairwise commuting elements of $\mathrm{M}_n(K)$ are simultaneously upper triangularizable \cite[Theorem~1.1.5]{radjavi2012simultaneous}. Now suppose a matrix $h \in \{h_\alpha\}_\alpha$ has more than one eigenvalue. By putting $h$ into Jordan canonical form, for instance, we may assume $h$ is of the form
\[
h = \mathrm{diag}(h_1, h_2)
\]
where $h_i \in \mathrm{M}_{n_i}(K)$ for $i=1,2$ and $h_1, h_2$ do not share an eigenvalue. Since the $h_\alpha$ commute with~$h$, they preserve the generalized eigenspaces of $h$, and so $h_\alpha$ also has a block-diagonal structure
\[
h_\alpha = \mathrm{diag}(h_{\alpha,1}, h_{\alpha,2})
\]
where $h_{\alpha, i} \in \mathrm{M}_{n_i}(K)$ for $i=1,2$. The lemma now follows by applying the induction hypothesis to the collections $\{h_{\alpha,i}\}_\alpha$, $i=1,2$.
\end{proof}

We now prove what one might call a ``thick flat torus theorem."

\begin{lemma}\label{flat}
Suppose $X$ is a complete $\pi$-visible CAT(0) space and $H$ is a free abelian subgroup of $\mathrm{Isom}(X)$ with a basis $h_1, \ldots, h_r \in H$ consisting of ballistic isometries such that for each $m \in \{1, \ldots, r\}$, there is no $(m-1)$-dimensional flat in $X$ whose boundary contains the canonical attracting fixed points $\omega_{h_1}, \ldots, \omega_{h_m}$. Then $H$ preserves and acts as a lattice of translations on a thick flat of dimension $r$ in $X$. 
\end{lemma}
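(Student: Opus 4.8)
The plan is to induct on $r$, using Theorem~\ref{duchesne} at each stage to peel off an $\mathbb{R}$-factor on which $h_m$ translates while the remaining generators are controlled. First I would handle the base case $r=0$ trivially (the thick flat is all of $X$, with $\mathbb{R}^0$ a point, and the hypothesis about a $(-1)$-dimensional flat is vacuous). For the inductive step, suppose the statement holds for free abelian groups of rank $r-1$ satisfying the hypothesis, and let $H = \langle h_1, \ldots, h_r\rangle$ be as in the lemma. Apply Theorem~\ref{duchesne} to the ballistic isometry $h_r \in \mathrm{Isom}(X)$: there is a closed convex $Y \subset X$ with a metric decomposition $Y = Z \times \mathbb{R}$ such that $\mathcal{Z}_{\mathrm{Isom}(X)}(h_r)$ preserves $Y$ and acts diagonally, by translations on the $\mathbb{R}$-factor, and $h_r$ acts neutrally on $Z$. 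Since $H$ is abelian, $h_1, \ldots, h_{r-1} \in \mathcal{Z}_{\mathrm{Isom}(X)}(h_r)$, so all of $H$ preserves $Y$ and acts diagonally with respect to $Y = Z \times \mathbb{R}$, acting by translations on $\mathbb{R}$.

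The next step is to show that the projections $\bar h_1, \ldots, \bar h_{r-1}$ of $h_1, \ldots, h_{r-1}$ to $\mathrm{Isom}(Z)$ are again ballistic and satisfy the flat-avoidance hypothesis inside $Z$ (which is $\pi$-visible, being a closed convex subspace of $X$ — I would want to confirm $\pi$-visibility passes to closed convex subspaces, or else work with the product decomposition directly). Ballisticity of $\bar h_j$ for $j < r$: if some $\bar h_j$ were neutral, then $h_j$ would be a product of a neutral isometry of $Z$ with a translation of $\mathbb{R}$; by Lemma~\ref{canonical} (or Lemma~\ref{product}) its canonical attracting point $\omega_{h_j}$ would lie in the $\partial\mathbb{R}$-direction — but then $\omega_{h_j} = \omega_{h_r}$ or $\omega_{h_j}$ and $\omega_{h_r}$ lie on the boundary of the $1$-flat $\{z\}\times\mathbb{R}$, contradicting the $m=j{+}1 \le r$ (or $m=2$) instance of the hypothesis. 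More carefully: by Lemma~\ref{product}, $\omega_{h_j} = (\arctan(t_j/s_j), \omega_{\bar h_j}, \omega_{h_r})$ in the join $\partial Z * \partial \mathbb{R}$, where $s_j > 0$ is the translation amount of $h_j$ on $\mathbb{R}$ and $t_j = |\bar h_j|_Z \ge 0$; if $t_j = 0$ this forces $\omega_{h_j}$ into the $\partial\mathbb{R}$ factor. Then, for the flat-avoidance hypothesis in $Z$: given $m \le r-1$, suppose $F \subset Z$ were an $(m-1)$-flat with $\omega_{\bar h_1}, \ldots, \omega_{\bar h_m}$ on $\partial F$. Then $F \times \mathbb{R} \subset Y \subset X$ is an $m$-flat, and by the join formula above, $\omega_{h_1}, \ldots, \omega_{h_m}$ all lie on $\partial(F \times \mathbb{R}) = \partial F * \partial\mathbb{R}$, contradicting the $(m{+}1)$-instance of the hypothesis for $H$ in $X$. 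So the induction hypothesis applies to $\bar H := \langle \bar h_1, \ldots, \bar h_{r-1}\rangle$ acting on $Z$, provided $\bar H$ is free abelian of rank $r-1$ — which follows since the kernel of $H \to \bar H$ consists of isometries acting trivially on $Z$, hence (being also translations on $\mathbb{R}$) are determined by their translation length, so the kernel is infinite cyclic or trivial; and since each $\bar h_j$ ($j<r$) is ballistic hence infinite order, the $\bar h_j$ are independent and $\bar H \cong \mathbb{Z}^{r-1}$.

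By induction, $\bar H$ preserves a thick flat $(W, \psi)$ of dimension $r-1$ in $Z$, where $\psi: W \xrightarrow{\sim} W' \times \mathbb{R}^{r-1}$ with $\bar H$ acting neutrally on $W'$ and as a lattice of translations on $\mathbb{R}^{r-1}$. Then $W \times \mathbb{R} \subset Z \times \mathbb{R} = Y \subset X$ is closed and convex, and $\psi \times \mathrm{id}$ identifies it with $W' \times \mathbb{R}^{r-1} \times \mathbb{R} = W' \times \mathbb{R}^r$. I claim $H$ preserves $W \times \mathbb{R}$ and acts as a lattice of translations on this thick flat. Each $h_j$ preserves $W$ (as $\bar h_j$ does) and $\mathbb{R}$, hence preserves $W \times \mathbb{R}$; it acts neutrally on $W'$ (since $\bar h_j$ does for $j<r$, and $h_r$ acts neutrally on all of $Z \supset W$ hence on $W'$) and by a translation on each of the $\mathbb{R}^{r-1}$ and the new $\mathbb{R}$, hence by a translation on $\mathbb{R}^r$. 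The induced homomorphism $H \to \mathbb{R}^r$ is injective: its kernel acts neutrally on $W'$ and trivially on $\mathbb{R}^r$, but a nontrivial $h \in H$ is a nontrivial power of a primitive element and, having every generator ballistic, $H$ contains no neutral nontrivial element save... — here I must be slightly careful, so instead I argue the composite $H \to \bar H \to \mathbb{R}^{r-1}$ together with $H \to \mathbb{R}$ (the $h_r$-direction translation) is injective on $H$ because its kernel would lie in the kernel of $H \to \bar H$ (infinite cyclic, generated by some $h_0$ acting trivially on $Z$) and also translate trivially in the $h_r$-direction, forcing $h_0$ to act trivially on $Y$; but $h_0 \in H$ and all nontrivial elements of $H$ act ballistically on... no — $h_0$ need not be one of the $h_j$. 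The cleanest fix: note $h_0$, acting trivially on $Z$ and as a translation on $\mathbb{R}$, has $|h_0|_X$ equal to its $\mathbb{R}$-translation length; if that is $0$ then $h_0$ is trivial as an isometry of $Y \supset$ a point of each factor — but $h_0$ acts on $X$, not just $Y$, so I instead observe $h_0$ commutes with all of $H$ and use that $\omega_{h_0}$, if $h_0$ is ballistic, points in the $\partial\mathbb{R}$-direction, i.e. equals $\omega_{h_r}$, so $h_0 h_r^{-k}$ for suitable... this is getting delicate. I will instead package the injectivity directly: the map $H \to \mathbb{R}^{r-1} \times \mathbb{R} = \mathbb{R}^r$ has image containing the lattice $\bar H \hookrightarrow \mathbb{R}^{r-1}$ lifted, and has the same rank $r$ as $H$ (since $h_1, \ldots, h_r$ map to $\mathbb{R}$-linearly-independent vectors: the first $r-1$ project to a basis of the rank-$(r-1)$ lattice, and $h_r$ has nonzero last coordinate by Theorem~\ref{duchesne} while the others... may not have zero last coordinate, but linear independence still holds as the projection to the first $r-1$ coordinates has full rank on $\langle h_1,\ldots,h_{r-1}\rangle$ and $h_r$ projects to $0$ there yet nonzero in the last). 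Hence $H \to \mathbb{R}^r$ is injective with discrete, cocompact-in-$\mathbb{R}^r$... — discreteness of the image follows since it is a finitely generated subgroup of $\mathbb{R}^r$ of rank $r$ whose projection to $\mathbb{R}^{r-1}$ is the discrete lattice $\bar H$ and whose kernel-of-that-projection maps injectively to the last $\mathbb{R}$ via a ballistic generator, giving a discrete subgroup there. Thus $H$ embeds as a lattice in $\mathbb{R}^r$, completing the induction.

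The main obstacle I anticipate is precisely this last bookkeeping: verifying that passing to the $Z$-factor keeps the generating set a genuine rank-$(r-1)$ free abelian group satisfying the hypotheses, and then reassembling to get that $H$ — not merely $\bar H$ together with $h_r$ — embeds as a \emph{lattice} (discrete, full-rank) in $\mathbb{R}^r$ rather than just as some free abelian group of translations. The join formula of Lemma~\ref{product} is the essential tool for transporting the flat-avoidance hypothesis between $X$ and $Z$, and I expect it to do most of the geometric work; the group-theoretic discreteness argument will require the most care.
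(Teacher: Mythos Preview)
Your strategy — induct using Theorem~\ref{duchesne} to peel off one $\mathbb{R}$-factor at a time — is exactly the paper's strategy, but there is a genuine gap in the order you chose. You peel off $h_r$ first and then try to verify the flat-avoidance hypothesis for $\bar h_1,\ldots,\bar h_{r-1}$ on $Z$. Your argument says: if an $(m-1)$-flat $F\subset Z$ contained $\omega_{\bar h_1},\ldots,\omega_{\bar h_m}$, then the $m$-flat $F\times\mathbb{R}\subset X$ would contain $\omega_{h_1},\ldots,\omega_{h_m}$ (and also $\omega_{h_r}$), ``contradicting the $(m{+}1)$-instance of the hypothesis.'' But the $(m{+}1)$-instance forbids an $m$-flat containing $\omega_{h_1},\ldots,\omega_{h_{m+1}}$, not $\omega_{h_1},\ldots,\omega_{h_m},\omega_{h_r}$; for $m<r-1$ these are different sets, and nothing rules out your $F\times\mathbb{R}$. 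So the inductive hypothesis does not transfer to $Z$ as claimed. (The same ordering problem is what makes your verification that $\bar H$ has rank $r-1$ shaky: the hypothesis constrains only the $\omega_{h_i}$, not $\omega_h$ for arbitrary $h\in H$, so ``infinite order implies independent'' does not follow.)

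The paper avoids this by inducting forward rather than backward: it proves that for each $m$ there is $Y_m=Z_m\times\mathbb{R}^m$ preserved by $\mathcal{Z}_{\mathrm{Isom}(X)}(h_1,\ldots,h_m)$ on which $\langle h_1,\ldots,h_m\rangle$ acts neutrally on $Z_m$ and as a lattice on $\mathbb{R}^m$. At the inductive step one applies Theorem~\ref{duchesne} to $h_m$ acting on $Z_{m-1}$; if $h_m$ were neutral there, Lemma~\ref{canonical} would place $\omega_{h_1},\ldots,\omega_{h_m}$ on the boundary of $\{z\}\times\mathbb{R}^{m-1}$, directly contradicting the $m$-th instance — no reindexing needed. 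This direction also makes the lattice bookkeeping automatic: $h_1,\ldots,h_{m-1}$ act neutrally on all of $Z_{m-1}$, hence by the \emph{zero} translation on the new $\mathbb{R}$-factor, so the image of $\langle h_1,\ldots,h_m\rangle$ in $\mathbb{R}^m$ is visibly a lattice. Your approach can be repaired by peeling off $h_1$ instead of $h_r$ (then $F\times\mathbb{R}$ contains $\omega_{h_1},\ldots,\omega_{h_{m+1}}$ after the obvious relabeling, and the $(m{+}1)$-instance genuinely applies), but the paper's formulation with centralizers is cleaner and dispenses with the rank and discreteness struggles you flagged at the end.
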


\begin{proof}
We prove by induction the following statement: for $m \in \{1, \ldots, r\}$, there is a closed convex subspace $Y_m$ of $X$ and a decomposition $Y_m = Z_m \times \mathbb{R}^m$ such that 
\begin{itemize}
\item $\mathcal{Z}_{\mathrm{Isom}(X)}(h_1, \ldots, h_m)$ preserves $Y_m$ and acts diagonally with respect to the decomposition $Y_m = Z_m \times \mathbb{R}^m$, acting by translations on the second factor;
\item the subgroup $\langle h_1, \ldots, h_m \rangle$ acts neutrally on the first factor and as a lattice of translations on the second. 
\end{itemize}
The base case $m=1$ is given by Theorem \ref{duchesne}. Now suppose the above holds for $m-1$, where $m \in \{2, \ldots, r\}$. Then $h_m$ must act ballistically on the factor $Z_{m-1}$, since otherwise $\omega_{h_1}, \ldots, \omega_{h_m}$ would be contained in the boundary of $\{z\} \times \mathbb{R}^{m-1}$ by Lemma \ref{canonical}, where $z$ is any point in $Z_{m-1}$. Now $Z_{m-1}$ is a complete $\pi$-visible CAT(0) space, so that by Theorem \ref{duchesne} there is a closed convex subspace $Y$ of $Z_{m-1}$ and a decomposition $Y= Z \times \mathbb{R}$ satisfying
\begin{itemize}
\item $\mathcal{Z}_{\mathrm{Isom}(Z_{m-1})}(h_m)$ preserves $Y$ and acts diagonally with respect to the decomposition $Y = Z \times \mathbb{R}$, acting by translations on the second factor;
\item the action of $h_m$ on the first factor $Z$ is neutral. 
\end{itemize} 
Then the subspace $Y_m := Y \times \mathbb{R}^{m-1} \subset Z_{m-1} \times \mathbb{R}^{m-1}$ has the desired properties.
\end{proof}

The following observation is used in the proof of Lemma \ref{NPC}.

\begin{lemma}\label{conjugate}
Let $X$ be a complete $\mathrm{CAT}(0)$ space and suppose $H < \mathrm{Isom}(X)$ is a free abelian subgroup with a basis $h_1, \ldots, h_r \in H$. Suppose $H$ preserves and acts as a lattice of translations on thick flats $Y, Y'$ in $X$, and let $\phi, \phi'$ be the maps $H \rightarrow \mathbb{R}^r$ induced by the actions of $H$ by translations on the Euclidean factors of $Y, Y'$, respectively. Then the unique linear map ${T: \mathbb{R}^r \rightarrow \mathbb{R}^r}$ satisfying $T(\phi(h_i)) = \phi'(h_i)$ for $i = 1, \ldots, r$ is orthogonal. 
\end{lemma}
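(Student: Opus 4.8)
The plan is to show that $T$ preserves the Euclidean norm on the image lattice $\phi(H)$, from which orthogonality follows since $\phi(H)$ spans $\mathbb{R}^r$. The key is that the quantity $\lVert \phi(h) \rVert$ is intrinsic to the isometry $h$, not to the chosen thick flat: in fact, for each $h \in H$, I claim $\lVert \phi(h) \rVert = |h|_X$, the translation length of $h$ on $X$. Indeed, if $H$ preserves the thick flat $(Y, \varphi)$ with $\varphi : Y \to Z \times \mathbb{R}^r$ and acts neutrally on $Z$, then on the Euclidean factor $h$ acts as translation by $\phi(h)$, so it acts on $Y$ as a Clifford-type isometry whose displacement function $y \mapsto d(y, hy)$ satisfies $d(y, hy)^2 = d_Z(p, h p)^2 + \lVert \phi(h) \rVert^2$ where $(p, \cdot) = \varphi(y)$; taking the infimum over $Y$ and using $|h|_Z = 0$ (neutrality) gives $\inf_{y \in Y} d(y, hy) = \lVert \phi(h) \rVert$. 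Since $Y$ is convex in $X$, $|h|_X \le \lVert \phi(h) \rVert$. For the reverse inequality I would argue that $h$ attains its translation length on $Y$: a neutral isometry of $Z$ has displacement function with infimum $0$, and combined with the exact translation on the $\mathbb{R}^r$ factor one sees the minimum of the displacement on $Y$ is exactly $\lVert \phi(h)\rVert$ and this is a global minimum because... — here is where care is needed; alternatively, and more cleanly, I would simply observe that whatever $|h|_X$ is, running the same computation with the thick flat $Y'$ gives $\inf_{y' \in Y'} d(y', hy') = \lVert \phi'(h) \rVert$, so it suffices to prove $\inf_{Y} d(y,hy) = \inf_{Y'} d(y', hy') = |h|_X$, i.e. that on a thick flat on which $H$ acts as a lattice of translations, $h$ realizes its translation length. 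That last fact follows because $\inf_Y d(\cdot, h\cdot) = \lVert\phi(h)\rVert$ is attained (pick $y$ with $\varphi(y) = (p,v)$ for $p$ nearly minimizing the displacement of $h$ on $Z$; and $|h|_X \le \lVert \phi(h)\rVert$ with equality forced by applying the inequality in both the $Y$ and $Y'$ directions and symmetry — in fact since $h^n$ also preserves $Y$ with $\phi(h^n) = n\phi(h)$, we get $|h|_X \le \lVert \phi(h)\rVert$ and $n|h|_X = |h^n|_X \le n\lVert\phi(h)\rVert$ gives nothing more, so I genuinely need the attainment argument).

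Concretely, I would run the argument as follows. First, fix $h \in H$ and the thick flat $(Y,\varphi)$, $\varphi : Y \to Z \times \mathbb{R}^r$. Write $\varphi h \varphi^{-1} = \sigma \times \tau$ where $\sigma \in \mathrm{Isom}(Z)$ is neutral and $\tau$ is translation by $v := \phi(h) \in \mathbb{R}^r$. For any $\varepsilon > 0$ choose $p \in Z$ with $d_Z(p, \sigma p) < \varepsilon$; then for $y := \varphi^{-1}(p, 0)$ we have $d_X(y, hy)^2 = d_Z(p,\sigma p)^2 + \lVert v\rVert^2 < \varepsilon^2 + \lVert v \rVert^2$, so $|h|_X \le \lVert \phi(h)\rVert$. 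Next, to get $|h|_X \ge \lVert\phi(h)\rVert$: the displacement function of $h$ on all of $X$ is convex, $h$-invariant, and bounded below by its restriction to $Y$, but that is not immediately enough. Instead I would use the canonical attracting fixed point: since $h$ is ballistic (as $\lVert\phi(h)\rVert > 0$ when $h$ has infinite order; if $h$ has finite order both sides are $0$), the translation direction on the flat $\{p\}\times\mathbb{R}^r$ through $Y$ points at $\omega_h$, and the asymptotic translation length $\lim_n \frac{1}{n} d_X(x, h^n x)$ equals $|h|_X$ for ballistic isometries while it also equals $\lim_n \frac1n d_X(y, h^n y) = \lim_n \frac1n \sqrt{d_Z(p,\sigma^n p)^2 + n^2\lVert v\rVert^2} = \lVert v\rVert$ using $d_Z(p,\sigma^n p) = o(n)$ by neutrality of $\sigma$. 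Hence $|h|_X = \lVert \phi(h)\rVert$, and symmetrically $|h|_X = \lVert \phi'(h)\rVert$, so $\lVert \phi(h)\rVert = \lVert \phi'(h)\rVert$ for every $h \in H$.

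Finally I deduce orthogonality of $T$. We have $\lVert T(\phi(h))\rVert = \lVert \phi'(h)\rVert = \lVert\phi(h)\rVert$ for all $h \in H$, i.e. $T$ preserves the norm of every vector in the lattice $\Lambda := \phi(H) \subset \mathbb{R}^r$. Since $H$ acts as a lattice of translations on $Y$, $\Lambda$ is a full-rank lattice, hence spans $\mathbb{R}^r$ and is $\mathbb{Q}$-dense in the sense that $\mathbb{Q}\Lambda = \mathbb{Q}^r$ in suitable coordinates; a linear map preserving $\lVert\cdot\rVert$ on a spanning set of vectors need not a priori be orthogonal, but here it preserves the norm on all of $\Lambda$, hence (by homogeneity and $\mathbb{Q}$-linearity) on $\mathbb{Q}\Lambda$, which is dense in $\mathbb{R}^r$, so by continuity $\lVert Tx\rVert = \lVert x\rVert$ for all $x \in \mathbb{R}^r$; a norm-preserving linear self-map of Euclidean space is orthogonal. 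This gives the claim.

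The main obstacle is the identity $|h|_X = \lVert\phi(h)\rVert$ — more precisely the lower bound $|h|_X \ge \lVert\phi(h)\rVert$, which is where the hypothesis that $H$ acts \emph{neutrally} on the $Z$-factor (not merely that $Y$ splits off $\mathbb{R}^r$) is essential; the asymptotic-translation-length argument via the ballistic theory of \cite{caprace2009isometry} is the cleanest route and sidesteps having to show the displacement minimum is attained. The rest (spanning, density, continuity, "norm-preserving linear implies orthogonal") is routine.
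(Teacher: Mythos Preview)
Your argument is correct and takes a genuinely different route from the paper. Both approaches rest on the identity $\lVert\phi(h)\rVert = |h|_X$, but the paper simply asserts this on the basis elements and then, to recover the inner products, invokes Lemma~\ref{canonical} together with the Tits (angular) metric on $\partial X$: it argues that $\angle(\phi(h_i),\phi(h_j))$ and $\angle(\phi'(h_i),\phi'(h_j))$ both coincide with the Tits distance between $\omega_{h_i}$ and $\omega_{h_j}$, so that norms and angles on the basis are preserved, whence $T$ is orthogonal. You instead push the norm identity to \emph{all} of $H$ via the stable translation length (using that $d_Z(p,\sigma^n p)=o(n)$ for a neutral $\sigma$, which is exactly the equality of translation length and stable translation length in $\mathrm{CAT}(0)$), and then conclude orthogonality by density of $\mathbb{Q}\,\phi(H)$ in $\mathbb{R}^r$. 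Your route is more self-contained in that it bypasses the boundary machinery and Lemma~\ref{canonical} entirely, at the cost of the density step; the paper's route is shorter once Lemma~\ref{canonical} is in hand and makes the geometric reason (the canonical attracting fixed points $\omega_{h_i}$ are independent of the thick flat) more transparent. One small remark: your aside about finite-order $h$ is harmless but unnecessary, since $H$ is free abelian.
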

\begin{proof}
We wish to show that $T$ preserves the standard inner product on $\mathbb{R}^r$. Since the $\phi(h_i)$ constitute a basis for $\mathbb{R}^r$, it suffices to show that $\langle \phi'(h_i), \phi'(h_j) \rangle = \langle \phi(h_i), \phi(h_j) \rangle$ for $i,j \in \{1, \ldots, r\}$. This is equivalent to saying that for $i,j \in \{1, \ldots, r\}$, we have $\| \phi (h_i) \| = \| \phi'(h_i) \|$ and $\angle (\phi(h_i), \phi(h_j)) = \angle (\phi'(h_i), \phi'(h_j))$. The former is true since
\begin{equation*}
\| \phi (h_i) \| = |h_i|_X = \| \phi' (h_i) \|
\end{equation*}
and the latter is true since $\angle (\phi(h_i), \phi(h_j))$ and $\angle (\phi'(h_i), \phi'(h_j))$ are both equal to the Tits distance between $\omega_{h_i}$ and $\omega_{h_j}$ on $\partial X$ by Lemma \ref{canonical}. 
\end{proof}

The proof of the following lemma borrows heavily from an argument of Leeb; see the proof of Theorem 2.4 in \cite{kapovich1996actions}. Note that we work with the JSJ decomposition of a graph manifold as opposed to its geometric decomposition, so that, for example, the twisted circle bundle over the M\"obius band may appear as a JSJ block of a graph manifold.

\begin{lemma}\label{NPC}
Let $M$ be a graph manifold and suppose $\pi_1(M)$ acts by isometries on a complete $\mathrm{CAT}(0)$ space $X$ such that for each JSJ torus $S$ of $M$, the subgroup $\pi_1(S) < \pi_1(M)$ preserves and acts as a lattice of translations on a thick flat in $X$. Then $M$ admits a nonpositively curved Riemannian metric. 
\end{lemma}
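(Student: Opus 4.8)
The plan is to use the criterion of Leeb (building on Kapovich–Leeb) characterizing which graph manifolds are nonpositively curved: a graph manifold $M$ fails to be NPC precisely when there is an obstruction coming from the gluing data along the JSJ tori — concretely, when some sequence of adjacent Seifert blocks forces incompatible ``slopes,'' or when a single block is glued to itself in a way that cannot be realized by an NPC metric. The strategy is the contrapositive phrasing is already built in: assuming the thick-flat hypothesis on each $\pi_1(S)$, I will show the gluing obstruction must vanish, hence $M$ is NPC. Throughout I will pass to the universal cover $\widetilde M$ with its decomposition into copies of the universal covers of the Seifert blocks, glued along copies of $\widetilde S \cong \mathbb{R}^2$ for the JSJ tori $S$; the dual tree $T$ of this decomposition carries a $\pi_1(M)$-action, and for each JSJ torus the peripheral subgroup $\pi_1(S) \cong \mathbb{Z}^2$ stabilizes an edge.

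The key steps, in order. First, for each JSJ torus $S$, apply the hypothesis to get a thick flat $Y_S = Z_S \times \mathbb{R}^2$ preserved by $\pi_1(S)$ on which it acts as a lattice of translations; via Lemma \ref{canonical} the translation vectors $\phi_S(h)$ for $h \in \pi_1(S)$ are determined (up to the orthogonal ambiguity of Lemma \ref{conjugate}) by the translation lengths $|h|_X$ and the Tits angles between the $\omega_h$. This means each peripheral $\mathbb{Z}^2$ comes equipped with a well-defined Euclidean structure, i.e.\ an inner product on $\pi_1(S) \otimes \mathbb{R}$, canonically associated to the action and independent of the choice of thick flat. Second, for each Seifert block $B$ with base orbifold $O_B$ and fiber class $f_B$, observe that the boundary tori $S_1, \ldots, S_k$ of $B$ all contain $f_B$, and restrict the Euclidean structures from step one to the line $\mathbb{R}f_B$ in each $\pi_1(S_i)\otimes \mathbb{R}$: since $\|\phi_{S_i}(f_B)\| = |f_B|_X$ is the same number for every $i$, the fiber gets a consistent length across all boundary components of $B$. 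Third — the crux — I must produce, for each block $B$, a nonpositively curved Riemannian metric (a product or twisted-product metric, hyperbolic or flat on the base orbifold times the circle fiber) whose restriction to each boundary torus $S_i$ realizes the prescribed Euclidean structure up to isotopy; then the metrics on adjacent blocks glue to a global NPC metric because their boundary metrics agree. This last realization is exactly where Leeb's argument in \cite{kapovich1996actions} is used: the flexibility in choosing the metric on a Seifert piece (scaling the fiber, choosing the hyperbolic/Euclidean structure on the base, and choosing the connection form) suffices to match any prescribed compatible collection of flat structures on the boundary, and the consistency needed for this to succeed simultaneously over all blocks is precisely the data assembled in steps one and two — in particular, the sign/twisting obstruction that can prevent an NPC metric is killed because the genuinely nonpositively-curved behavior on the $\mathbb{R}^2$-factors (translations of a flat, no unipotent distortion) rules out the ``unipotent'' gluings that obstruct nonpositive curvature.

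The main obstacle I expect is the third step: turning the abstract Euclidean structures on the peripheral subgroups into an actual NPC Riemannian metric on $M$. The subtlety is twofold. One must check that the structures coming from the CAT(0) action are not merely consistent block-by-block but globally consistent around the dual graph — e.g.\ that traversing a cycle of blocks does not accumulate a nontrivial monodromy that would obstruct the gluing; here I would argue that any such monodromy would force, via Lemma \ref{product} and the join description of $\partial X$, a ballistic element with degenerate attracting point or a flat of one dimension too small, contradicting either the thick-flat property or $\pi$-visibility. And for blocks that are twisted circle bundles over the Möbius band (allowed here since we use the JSJ rather than the geometric decomposition), one must handle the orientation-reversing identification on the boundary torus directly, checking that the induced involution on the peripheral $\mathbb{Z}^2$ is compatible with the Euclidean structure — this is the one place Leeb's original argument needs the extra care flagged in the remark preceding the lemma. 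Modulo carefully importing Leeb's construction, the nonpositive-curvature conclusion then follows.
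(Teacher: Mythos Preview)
Your outline is in the right spirit and overlaps with the paper's approach (extract flat structures on the peripheral tori from the $\mathrm{CAT}(0)$ action, then build the metric block by block following Leeb). But there is a real gap in your Step~3. To put an NPC metric on a Seifert block $B$ you need more than a common fiber length across the boundary tori: you need a \emph{single} homomorphism $\psi:\pi_1(B)\to\mathbb{R}$ (your ``connection form'') whose values on the boundary classes $z_i$ and on $f$ reproduce the shears dictated by your Euclidean structures. These values are not free---they satisfy a linear relation in $H^1(B;\mathbb{R})$ coming from the Seifert presentation---so ``flexibility in choosing the metric on a Seifert piece'' does not by itself let you realize an arbitrary compatible-looking collection of boundary flat structures. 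Nothing in your Steps~1--2 (fiber length and Tits angles, torus by torus) verifies that relation, and your appeal to Lemma~\ref{product} and the join description of $\partial X$ does not produce a homomorphism on $\pi_1(B)$.

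The paper closes this gap with a single stroke you omit: apply Theorem~\ref{duchesne} to the fiber element $f$. Since $f$ is central in $\pi_1(B)$, the entire block group lies in $\mathcal{Z}_{\mathrm{Isom}(X)}(f)$, hence preserves a convex $Y=Z\times\mathbb{R}$ and acts by translations on the $\mathbb{R}$-factor; that translation \emph{is} the required $\psi$, defined globally on $\pi_1(B)$ from the outset. The lengths of the boundary curves of the base orbifold are then read off as the translation lengths of the $z_i$ on $Z$ (ballistic because each $\langle f,z_i\rangle$ acts as a lattice on a thick flat), and the NPC metric on $B$ is $(\tilde O\times\mathbb{R})/\pi_1(B)$. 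After this, your Lemma~\ref{conjugate} is exactly what shows the boundary flat metrics agree across each JSJ torus, so no separate ``monodromy around cycles'' argument is needed---that worry disappears once the block-level construction is done via Theorem~\ref{duchesne}.
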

\begin{proof}
Let $B$ be a JSJ block of $M$, and let $f \in \pi_1(B)$ be an element representing a generic fiber of $B$. The element $f$ acts ballistically on $X$ since $f$ is a nontrivial element of $\pi_1(S)$, where $S$ is a torus boundary component of $B$, and $\pi_1(S)$ preserves and acts as a lattice of translations on a thick flat in $X$ by assumption. By Theorem \ref{duchesne}, there is a closed convex subspace $Y \subset X$ with a metric decomposition $Y = Z \times \mathbb{R}$ such that
\begin{itemize}
\item any element of $\pi_1(B)$ preserves $Y$ and acts diagonally with respect to the decomposition $Y = Z \times \mathbb{R}$, acting as a translation on the second factor;
\item the action of $f$ on the first factor $Z$ is neutral.
\end{itemize} 
Moreover, for each element $z \in \pi_1(B)$ representing a boundary component of the base orbifold~$O$ of $B$, the action of $z$ on $Z$ is ballistic since the subgroup $\langle f, z\rangle < \pi_1(B)$ preserves and acts as a lattice of translations on a thick flat in $X$. 

We now realize $B$ as a nonpositively curved Riemannian manifold with totally geodesic flat boundary as follows. Endow the orbifold $O$ with a nonpositively curved Riemannian metric that is flat near the boundary so that the length of each boundary component $c$ of $O$ is equal to the translation length on $Z$ of an element in $\pi_1(B)$ representing $c$. We let $\pi_1(B)$ act on the universal cover $\tilde{O}$ of $O$ via the projection $\pi_1(B) \rightarrow \pi_1(O)$, where $\pi_1(O)$ acts on $\tilde{O}$ by deck transformations. The product of this action with the action of $\pi_1(B)$ on $\mathbb{R}$ coming from the decomposition $Y = Z \times \mathbb{R}$ yields a covering space action of $\pi_1(B)$ on $\tilde{O} \times \mathbb{R}$. The quotient of~$\tilde{O} \times \mathbb{R}$ by this action is the desired geometric realization of $B$. We may do this for each Seifert component of $M$; the flat metrics on any pair of boundary tori that are matched in $M$ will coincide by Lemma \ref{conjugate}, so that we may glue the metrics on the Seifert components to obtain a smooth nonpositively curved metric on $M$.
\end{proof}

The following lemma will not be used in the proofs of Theorems \ref{main} or \ref{maingraph}, but will be applied to derive Corollary \ref{undistortedcor} from Theorem \ref{main}.

\begin{lemma}\label{undistorted}
Let $\Gamma$ be a finitely generated group and $H_0$ a free abelian subgroup of $\Gamma$ of rank~$r \geq 0$. Suppose $\Gamma$ acts on a complete $\mathrm{CAT}(0)$ space $X$ such that $H_0$ preserves and acts as a lattice of translations on a thick flat in $X$. Then $H_0$ is undistorted in $\Gamma$.
\end{lemma}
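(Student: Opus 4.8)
The plan is to compare the word metric on $\Gamma$ restricted to $H_0$ with the Euclidean metric on the $\mathbb{R}^r$ factor of the thick flat, using that $\Gamma$ acts by isometries on $X$. Fix a finite generating set $\Sigma$ of $\Gamma$, a basepoint $x_0 \in Y$ lying in the thick flat $Y = Z \times \mathbb{R}^r$, and let $\phi: H_0 \to \mathbb{R}^r$ be the embedding as a lattice of translations. First I would record the standard Milnor--Svarc-type inequality: there is a constant $C = \max_{s \in \Sigma} d_X(x_0, s x_0)$ such that $d_X(x_0, gx_0) \le C \, |g|_\Sigma$ for every $g \in \Gamma$, where $|g|_\Sigma$ denotes word length. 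This is immediate by writing $g = s_1 \cdots s_k$ and applying the triangle inequality together with the fact that each $s_i$ is an isometry.

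Next I would compute the left-hand side for $g = h \in H_0$. Since $H_0$ acts neutrally on $Z$ and by the translation $\phi(h)$ on $\mathbb{R}^r$, and since $x_0 \in Z \times \mathbb{R}^r$, the point $h x_0$ has the same $Z$-coordinate shifted by a neutral isometry; the key point is that $d_X(x_0, h x_0) \ge \|\phi(h)\|$, because the projection $Y \to \mathbb{R}^r$ is $1$-Lipschitz (it is a factor projection of a metric product) and moves $x_0$ to a point at Euclidean distance exactly $\|\phi(h)\|$. Combining the two displays gives
\begin{equation*}
\|\phi(h)\| \le d_X(x_0, h x_0) \le C \, |h|_\Sigma
\end{equation*}
for all $h \in H_0$.

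Finally I would close the loop using that $\phi(H_0)$ is a lattice in $\mathbb{R}^r$, hence its word metric (with respect to the finite generating set $\phi(\Sigma \cap H_0)$, or any finite generating set of $H_0$) is bi-Lipschitz to the restriction of the Euclidean norm $\|\cdot\|$; equivalently, the free abelian group $H_0$ of rank $r$ has its intrinsic word length $|h|_{H_0}$ comparable to $\|\phi(h)\|$, say $|h|_{H_0} \le C' \|\phi(h)\| + C'$. Chaining this with the inequality above yields $|h|_{H_0} \le C' C \, |h|_\Sigma + C'$, which is exactly the statement that the inclusion $H_0 \hookrightarrow \Gamma$ is undistorted (the reverse inequality $|h|_\Sigma \le |h|_{H_0}$ being trivial once we use a generating set of $\Gamma$ containing one for $H_0$, or absorbing a constant otherwise).

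I do not expect a serious obstacle here; the only point requiring a little care is the middle step, namely that the translation-length lower bound $d_X(x_0, h x_0) \ge \|\phi(h)\|$ genuinely uses the thick-flat hypothesis (neutrality on the $Z$ factor is what prevents the $Z$-component from contributing nothing, but in fact we only need that the $\mathbb{R}^r$-projection is $1$-Lipschitz and realizes the full displacement $\|\phi(h)\|$, which holds in any metric product regardless of the $Z$-action). One should also note that $r = 0$ is a vacuous case and that the argument never requires properness of the action, only that it is isometric.
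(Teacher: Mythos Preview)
Your proof is correct and follows essentially the same route as the paper's: bound $d_X(x_0,hx_0)$ above by $C\,|h|_\Sigma$ via the triangle inequality, bound it below by $\|\phi(h)\|$ via the $1$-Lipschitz product projection $Y\to\mathbb{R}^r$, and then use that the word metric on $H_0$ is comparable to the Euclidean norm on the lattice $\phi(H_0)$ (the paper phrases this as equivalence of norms on $\mathbb{R}^r$). Your exposition of the middle inequality is in fact a bit more explicit than the paper's, and the additive constant $C'$ you carry is harmless (and in fact unnecessary, since both quantities vanish at the identity).
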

\begin{proof}
Let $ \mathcal{B} = \{h_1, \ldots, h_r\} \subset H_0$ be a basis for $H_0$, and let $| \cdot |_\mathcal{B}$ be the word metric on $H_0$ with respect to $\mathcal{B}$. Let $\mathcal{S} \subset \Gamma$ be a finite generating set for $\Gamma$ and let $| \cdot |_\mathcal{S}$ be the word metric on $\Gamma$ with respect to $\mathcal{S}$. Let $\phi: H_0 \rightarrow \mathbb{R}^r$ be the homomorphism to $\mathbb{R}^r$ induced by the action of $H_0$ on a thick flat in $X$, and let $y_0 \in Y$,  $K = \max_{s \in \mathcal{S}\cup \mathcal{S}^{-1}}d_X(y_0, sy_0)$. Since any two norms on $\mathbb{R}^r$ are equivalent, there is some $C > 0$ such that $\| \phi(h) \| \geq C |h|_\mathcal{B}$ for any $h \in H_0$. Thus, for $h \in H_0$, we have
\begin{equation*}
K |h|_\mathcal{S} \geq d_X(y_0, hy_0) \geq \| \phi(h) \| \geq C |h|_\mathcal{B}
\end{equation*}
where the first inequality follows from the triangle inequality.
\end{proof}

\section{Proof of Theorems \ref{main} and \ref{maingraph}}\label{proofs}

\begin{proof}[Proof of Theorem \ref{main}]
\begin{enumerate}[leftmargin=0pt, itemindent=20pt,
labelwidth=15pt, labelsep=5pt, listparindent=0.7cm,
align=left, label=(\roman*)]
\item Since $\Gamma$ is finitely generated, we have that $\Gamma \subset \mathrm{SL}_n(A)$ for some finitely generated subdomain $A \subset \mathbb{C}$. Let $E = \mathbb{Q}(A) \subset \mathbb{C}$, so that $E$ is a finitely generated field extension of $\mathbb{Q}$. The extension $E/\mathbb{Q}$ has the structure $\mathbb{Q} \subset F \subset F(T) \subset E$, where $F$ is the algebraic closure of $\mathbb{Q}$ in~$E$, and $T$ is a (possibly empty) transcendence basis for $E$ over $F$. Since the extension $E/\mathbb{Q}$ is finitely generated, the set $T$ is finite and the extensions $F/\mathbb{Q}$ and $E/F(T)$ are of finite degree. 

Let $d = \mathrm{deg}(F/\mathbb{Q})$, and let $\sigma_1, \ldots, \sigma_d$ be the embeddings of $F$ in $\mathbb{C}$. Since $\sigma_j(F)$ is countable but $\mathbb{C}$ is not, the extension $\mathbb{C}/\sigma_j(F)$ has infinite transcendence degree, and hence we may extend~$\sigma_j$ to an embedding $\sigma_j: F(T) \rightarrow \mathbb{C}$. The latter may in turn be extended to an embedding~${\sigma_j: E \rightarrow \mathbb{C}}$ since $E/F(T)$ is algebraic and $\mathbb{C}$ is algebraically closed. The embedding~${\sigma_j: E \rightarrow \mathbb{C}}$ induces an embedding $\sigma_j: \mathrm{SL}_n(E) \rightarrow \mathrm{SL}_n(\mathbb{C})$. Let \[ \sigma: \mathrm{SL}_n(E) \rightarrow G_1 : = \prod_{j=1}^d \mathrm{SL}_n(\mathbb{C}) \] be the diagonal embedding induced by the maps $\sigma_j : \mathrm{SL}_n(E) \rightarrow \mathrm{SL}_n(\mathbb{C})$. Then $\mathrm{SL}_n(E)$ acts by isometries on the Hadamard manifold $X_1 := \prod_{j=1}^d M_j$ via the embedding $\sigma$, where each $M_j$ is a copy of the symmetric space (unique up to scaling of the Riemannian metric) associated to the simple Lie group $\mathrm{SL}_n(\mathbb{C})$. 

By \cite[Prop.~1.2]{alperin1982linear}, there are finitely many discrete valuations $\nu_1, \ldots, \nu_m$ on $E$ such that $A \cap \bigcap_{i=1}^m \mathcal{O}_i \subset \mathcal{O}$, where~$\mathcal{O}$ is the ring of integers of $F$ and $\mathcal{O}_i$ is the valuation ring of $\nu_i$. Let~$B_i$ be the Bruhat–Tits building associated to $\mathrm{SL}_n(E_{\nu_i})$, where $E_{\nu_i}$ is the completion of $E$ with respect to $\nu_i$;  let $X_2 = \prod_{i=1}^m B_i$; and let $\tau: \mathrm{SL}_n(E) \rightarrow G_2 := \prod_{i=1}^m \mathrm{SL}_n(E_{\nu_i})$ be the diagonal embedding. Then $\mathrm{SL}_n(E)$ acts by automorphisms on $X_2$ via the embedding $\tau$. We claim that the diagonal action of $\Gamma$ on ${X := X_1 \times X_2}$ via $\sigma \times \tau: \mathrm{SL}_n(E) \rightarrow G_1 \times G_2$ has the desired properties. 

To that end, let $H$ be a subgroup of $\Gamma$ containing no nontrivial unipotent elements. We first claim that for any vertex $v$ of $X_2$, the subgroup $\sigma(H_v) < G_1$ is discrete, where $H_v$ is the stabilizer of $v$ in $H$. Indeed, let $h \in H_v$. Then for $i=1, \ldots, m$, the element $h$ fixes a vertex of~$B_i$ and (since~$\mathrm{GL}_n(E)$ acts transitively on the vertices of $B_i$) is thus conjugate within $\mathrm{GL}_n(E)$ into~$\mathrm{SL}_n(\mathcal{O}_i)$; in particular, the coefficients of the characteristic polynomial $\chi_h$ of $h$ lie in $\mathcal{O}_i$. Since this is true for each $i \in \{1, \ldots, m\}$ and since $h \in \mathrm{SL}_n(A)$, we have that the coefficients of~$\chi_h$ lie in $A \cap \bigcap_{i=1}^m \mathcal{O}_i$ and hence in $\mathcal{O}$. We thus have a commutative diagram
\begin{equation}\label{diagram}
  \begin{tikzcd}
    G_1 = \prod_{j=1}^d \mathrm{SL}_n(\mathbb{C}) \arrow{r}{P}  & \prod_{j=1}^d \mathbb{C}^n  \\
    H_v \arrow{u}{\sigma} \arrow{r}{p} & \mathcal{O}^n \arrow{u}{\hat{\sigma}}
  \end{tikzcd}
\end{equation}
where the function $p$ maps an element $h \in H_v$ to the $n$-tuple whose entries are the non-leading coefficients of $\chi_h$, the function $P$ is the $d$-fold product of the analogous map $\mathrm{SL}_n(\mathbb{C}) \rightarrow \mathbb{C}^n$, and the function $\hat{\sigma}$ is given by
\[
\hat{\sigma}(\alpha_1, \ldots, \alpha_n) = (\sigma_1(\alpha_1), \ldots, \sigma_1(\alpha_n), \ldots, \sigma_d(\alpha_1), \ldots, \sigma_d(\alpha_n))
\]
for $\alpha_1, \ldots, \alpha_n \in \mathcal{O}$. Since $\hat{\sigma}$ has discrete image (see, for example, Lemma 25.1.10 in \cite{kargapolov1979fundamentals}) and the diagram (\ref{diagram}) is commutative, it follows that $P(\sigma(H_v))$ is discrete in $\prod_{j=1}^d \mathbb{C}^n$. Now suppose we have a sequence $(h_k)_{k \in \mathbb{N}}$ in $H_v$ such that $\sigma(h_k) \rightarrow 1$ in $G_1$. Then, by continuity of the function~$P$, we have $P(\sigma(h_k)) \rightarrow P(1)$. By discreteness of $P(\sigma(H_v))$, this implies that $P(\sigma(h_k)) = P(1)$ for $k$ sufficiently large. It follows that for such $k$ the matrix $h_k$ is unipotent and hence trivial by our assumption that $H$ contains no nontrivial unipotent elements. We conclude that $\sigma(H_v)$ is indeed discrete in $G_1$.  

We now argue that for any $x \in X_2$, there is a neighborhood $V$ of $x$ in $X_2$ such that $H_V \subset H_v$ for some vertex $v$ of $X_2$, where
\[
H_V = \{ h \in H \> : \> V \cap hV \neq \emptyset\}.
\]
Let $c$ be the cell of $X_2$ containing $x$ and let $\ell$ be the dimension of $c$. Let $\epsilon > 0$ be such that the intersection of the ball $B_{X_2}(x, \epsilon)$ with the $\ell$-skeleton $X_2^\ell$ of $X_2$ is contained in $c$. Then we may take $V = B_{X_2}(x, \epsilon/2)$. Indeed, if $h \in H_V$, then $hx \in X_2^\ell \cap B_{X_2}(x, \epsilon) \subset c$, and so $hc = c$. Since~$\mathrm{SL}_n(E)$ acts on $B_i$ without permutations, it follows that $h \in H_v$ for any vertex $v$ of $c$. 

Now, to see that $H$ acts properly on $X$, we observe that for any point $x \in X_2$ and any ball~$B \subset X_1$, the set $U := B \times V \subset X$ has the property that $\{ h \in H \> : \> U \cap hU \neq \emptyset \}$ is finite, where $V \subset X_2$ is as in the preceding paragraph. Indeed, we have $H_V \subset H_v$ for some vertex $v$ of~$X_2$, and $H_v$ acts properly on $X_1$ since $\sigma$ embeds $H_v$ discretely in $G_1$.

\item Suppose $H$ is free abelian with a basis $h_1, \ldots, h_r \in H$. We show that this basis is as in the statement of Lemma \ref{flat}, so that $H$ preserves and acts as a lattice of translations on a thick flat in $X$. Indeed, by Lemma \ref{triangularize}, we may assume that for $j \in \{1, \ldots, d\}$, $k \in \{1, \ldots, r\}$, we have
\[
\sigma_j(h_k) = \mathrm{diag}(h_{j,k,1}, \ldots, h_{j,k, s})
\]
where $h_{j,k,\ell} \in \mathrm{GL}_{n_\ell}(\mathbb{C})$ is upper triangular with a single eigenvalue for $\ell \in \{1, \ldots s\}$. 
We now have a homomorphism $\Delta_j: H \rightarrow \mathrm{SL}_n(\mathbb{C})$ that maps $h \in H$ to the diagonal part of $\sigma_j(h)$; note that $\Delta_j$ is injective since $H$ contains no nontrivial unipotent matrices. The embeddings~$\Delta_j$ produce a diagonal embedding $\Delta: H \rightarrow G_1$. Now let $\Delta': H \rightarrow G_1 \times G_2$ be the product of~$\Delta$ with $\tau \bigr|_H : H \rightarrow G_2$. Then, since $\Delta_j(h)$ has the same characteristic polynomial as $\sigma_j(h)$ for each~$h \in H$, and since $\Delta_j(H)$ contains no nontrivial unipotent matrices, the action of $\Delta'(H)$ on $X$ is proper by the above arguments. Since the latter action is by semisimple isometries, by Theorem \ref{classicalflattorus} there is a genuine $r$-dimensional flat in $X$ preserved by $\Delta'(H)$ on which $\Delta'(H)$ acts as a lattice of translations. Thus, by Lemmas \ref{product} and \ref{triangular}, each nontrivial~$h \in H$ acts ballistically on $X$ and the canonical attracting fixed point of~$h$ on $\partial X$ is equal to that of~$\Delta'(h)$; in particular, $\omega_{h_1}, \ldots, \omega_{h_r}$ must be of the desired form. 

\item Suppose $g \in \Gamma$ is diagonalizable (over $\mathbb{C}$). Since any isometry of $X_2$ is semisimple, to show that $g$ acts as a semisimple isometry of $X$, it suffices to show that $\sigma_j(g)$ is a semisimple isometry of $M_j$ for~$j=1, \ldots, d$. To that end, we show that $\sigma_j(g)$ is diagonalizable. Indeed, since a diagonalization of $g$ has entries in the splitting field $\tilde{E} \subset \mathbb{C}$ of $\chi_g$ over $E$, we in fact have~${g = CDC^{-1}}$ for some $C,D \in \mathrm{SL}_n(\tilde{E})$ with $D$ diagonal (see, for example, \cite[Theorem~8.11]{roman2013advanced}). Since $\mathbb{C}$ is algebraically closed, we may extend $\sigma_j$ to an embedding $\tilde{\sigma}_j: \tilde{E} \rightarrow \mathbb{C}$. Now 
\[
\sigma_j(g) = \tilde{\sigma}_j(g) = \tilde{\sigma}_j(C) \> \tilde{\sigma}_j(D) \> \tilde{\sigma}_j(C)^{-1}
\]
and $\tilde{\sigma}_j(D)$ is diagonal.
\end{enumerate}
 \end{proof}

We recover the following result, due to Button \cite[Theorem~5.2]{button2017properties}.

\begin{corollary}\label{undistortedcor}
Let $\Gamma$ be a finitely generated group and $H$ a distorted finitely generated abelian subgroup of $\Gamma$. Then for any representation $\rho: \Gamma \rightarrow \mathrm{SL}_n(\mathbb{C})$, there is an infinite-order element~${h \in H}$ such that $\rho(h)$ is unipotent. 
\end{corollary}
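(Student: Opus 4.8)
The plan is to reduce to Theorem \ref{main} together with Lemma \ref{undistorted}, arguing by contradiction. Suppose $\rho: \Gamma \to \mathrm{SL}_n(\mathbb{C})$ is a representation such that $\rho(h)$ is not unipotent for every infinite-order element $h \in H$. First I would replace $\Gamma$ by $\rho(\Gamma)$: since $H$ is distorted in $\Gamma$ and $\rho$ is a homomorphism, $\rho(H)$ is distorted in $\rho(\Gamma)$ provided $\rho\bigr|_H$ has finite-index image onto... — more carefully, distortion is preserved under quotient homomorphisms only in one direction, so instead I would work directly with $\Gamma$ acting on the CAT(0) space $X$ supplied by Theorem \ref{main} applied to $\rho(\Gamma) < \mathrm{SL}_n(\mathbb{C})$, pulling the action back to $\Gamma$ via $\rho$.

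The key point is to locate a finite-rank free abelian subgroup of $H$ to which part \ref{thickflat} of Theorem \ref{main} applies. Since $H$ is a finitely generated abelian group, write $H = H_0 \times T$ with $H_0$ free abelian of some rank $r \geq 0$ and $T$ finite. A distorted finitely generated abelian group must be infinite, and in fact $H_0$ must be distorted in $\Gamma$: the inclusion $H_0 \hookrightarrow H$ is a quasi-isometric embedding (finite index, or rather $H/H_0$ finite), so if $H_0$ were undistorted then $H$ would be undistorted. Hence $r \geq 1$ and $H_0$ is distorted in $\Gamma$. Now consider $\rho(H_0) < \rho(\Gamma) < \mathrm{SL}_n(\mathbb{C})$. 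By hypothesis no infinite-order element of $H_0$ is sent by $\rho$ to a unipotent matrix; since $H_0$ is torsion-free, this means $\rho(H_0)$ contains no nontrivial unipotent elements (a nontrivial unipotent in $\mathrm{SL}_n(\mathbb{C})$ has infinite order, so any $h \in H_0 \setminus \{1\}$ with $\rho(h)$ unipotent would be an infinite-order element mapping to a unipotent, contradicting the hypothesis). In particular $\ker(\rho) \cap H_0 = 1$, so $\rho(H_0)$ is free abelian of rank $r$.

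Therefore $\rho(H_0)$ is a free abelian subgroup of $\rho(\Gamma)$ of finite rank containing no nontrivial unipotent matrices, so by Theorem \ref{main}\ref{thickflat}, $\rho(H_0)$ preserves and acts as a lattice of translations on a thick flat in the complete $\mathrm{CAT}(0)$ space $X$ on which $\rho(\Gamma)$ acts. Pulling back along $\rho$, the group $\Gamma$ acts on $X$ with $H_0$ preserving and acting as a lattice of translations on this thick flat — here I note that $H_0$ acts on the Euclidean factor through its image $\rho(H_0)$, which is still a lattice since $\rho\bigr|_{H_0}$ is injective. By Lemma \ref{undistorted}, $H_0$ is undistorted in $\Gamma$, contradicting the conclusion of the previous paragraph that $H_0$ is distorted. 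This contradiction completes the proof.

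The main obstacle I anticipate is the bookkeeping around torsion and the passage through $\rho$: one must be careful that "distorted" transfers correctly between $H$ and its free part $H_0$ (it does, since they are commensurable), and that the lattice/translation structure survives pullback along a homomorphism that is injective on $H_0$ but possibly not on $\Gamma$. Neither is deep, but both need to be stated cleanly. Everything else is a direct invocation of Theorem \ref{main}\ref{thickflat} and Lemma \ref{undistorted}.
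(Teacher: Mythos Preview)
Your proposal is correct and follows essentially the same route as the paper: pass to a finite-index free abelian subgroup $H_0$ of $H$, note that the contrapositive hypothesis forces $\rho$ to be injective on $H_0$ with $\rho(H_0)$ unipotent-free, apply Theorem~\ref{main}\ref{thickflat} to obtain a thick flat for $H_0$, and then invoke Lemma~\ref{undistorted} to conclude $H_0$ (hence $H$) is undistorted. The paper's proof is slightly terser about the bookkeeping you flag (the transfer of distortion between $H$ and $H_0$, and the pullback of the lattice action along $\rho$), but the underlying argument is identical.
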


\begin{proof}
Let $H_0 < H$ be a free abelian subgroup of finite-index, and suppose there is a representation $\rho_0: \Gamma \rightarrow \mathrm{SL}_n(\mathbb{C})$ that does not map any nontrivial element of $H_0$ to a unipotent matrix (in particular, $\rho$ is faithful on $H_0$). Then, by Theorem \ref{main}, there is an action of $\Gamma$ via $\rho$ on a complete CAT(0) space $X$ such that $H_0$ preserves and acts by translations on a thick flat in $X$. By Lemma \ref{undistorted}, it follows that $H_0$ is undistorted in $\Gamma$, and hence the same is true of $H$.
\end{proof}

\begin{proof}[Proof of Theorem \ref{maingraph}]
Suppose otherwise, so that for each JSJ torus $S$ of $M$, the representation~$\rho$ is faithful on $\pi_1(S) < \pi_1(M)$ and the image $\rho(\pi_1(S))$ contains no nontrivial unipotent matrices. Then, by Theorem \ref{main}, there is an action of $\pi_1(M)$ via $\rho$ on a complete CAT(0) space $X$ such that for each JSJ torus $S$ of $M$, the subgroup $\pi_1(S)$ preserves and acts as a lattice of translations on a thick flat in $X$. Thus, $M$ admits a nonpositively curved metric by Lemma \ref{NPC}. 
\end{proof}

\bibliographystyle{amsalpha}
\bibliography{biblio}

\end{document}